\documentclass[10pt,b5paper]{article}
\usepackage[T1]{fontenc}
\usepackage[utf8]{inputenc}
\usepackage{amsmath,amssymb,amsthm}
\usepackage{geometry}
\newcommand{\E}{\mathbb E}
\newcommand{\Bin}{\operatorname{Bin}}
\newcommand{\Poi}{\operatorname{Poi}}
\newcommand{\NB}{\operatorname{NB}}
\newcommand{\Hyp}{\operatorname{Hyp}}
\newcommand{\Z}{\mathbb Z}
\newcommand{\R}{\mathbb R}

\theoremstyle{plain}
\newtheorem{theorem}{Theorem}[section]
\newtheorem{lemma}[theorem]{Lemma}
\newtheorem{proposition}[theorem]{Proposition}

\theoremstyle{remark}
\newtheorem{remark}[theorem]{Remark}
\theoremstyle{definition}

\title{The mean absolute deviation of the classical discrete distributions:\\
collapse identities, complete asymptotic expansions,\\
and enveloping series}
\author{
N.~Elezovi\'c\\
Department of Applied Mathematics,\\
Faculty of Electrical Engineering and Computing,\\
University of Zagreb, 10000 Zagreb, Croatia\\
\texttt{neven.elezovic@fer.hr}
}
\date{\today}

\begin{document}
\maketitle

\begin{abstract}
	For each of the four classical discrete laws --- binomial, Poisson, negative binomial and
	hypergeometric --- the mean absolute deviation about the mean collapses to a single point
	mass.  We give a common telescoping proof of these identities and interpret the resulting
	closed forms by size biasing.  We then derive complete asymptotic expansions for the Poisson ($\lambda\to\infty$),
	negative binomial ($r\to\infty$, $p$ fixed) and hypergeometric ($N\to\infty$, margins in fixed
	proportion) cases, extending the binomial expansion from the
	companion papers.  The coefficients are given in closed Bernoulli-polynomial form and carry
	the lattice displacement of the mean exactly.  At integer means the expansions reduce to
	sign-alternating odd series, and a single Binet-kernel argument shows that these series envelop the logarithm of the
	normalised mean absolute deviation: successive partial sums bracket it.
\end{abstract}

\noindent\textbf{2020 Mathematics Subject Classification.}
	60E05, 62E20, 41A60, 60C05, 11B68, 33B15.

\medskip\noindent\textbf{Keywords.}
	Mean absolute deviation; Poisson distribution; negative binomial distribution;
	hypergeometric distribution; Bernoulli polynomials; size-biasing; Stirling series;
	enveloping series; lattice corrections.

\section{Introduction}\label{sec:intro}

	Let $X$ be one of the classical discrete random variables, with mean $\mu$.  The
	\emph{mean absolute deviation}
	\[
		\E\,|X-\mu|
	\]
	is, for the binomial law, one of the oldest explicitly summed quantities in probability:
	De~Moivre's formula
	\begin{equation}\label{eq:demoivre}
		\E|X-Np|=2\nu q\,b(\nu;N,p),\qquad \nu=\lceil Np\rceil,\quad
		b(k;N,p)=\binom Nkp^kq^{N-k},
	\end{equation}
	reduces the whole expectation to a single point mass.  The history of \eqref{eq:demoivre} and
	of the family of identities it belongs to is told by Diaconis and
	Zabell~\cite{diaconis_zabell}; closed forms of the same kind for the other classical laws are
	classical as well (Ramasubban~\cite{ramasubban1958}, Crow~\cite{crow1958},
	Katti~\cite{katti1960}, Kamat~\cite{kamat1965}; they are recorded in Johnson, Kotz and
	Kemp~\cite{jkk} as eqs.\ (3.15), (4.19), (5.27) and the hypergeometric formula of
	Chapter~6).

	In the companion papers \cite{elezovic_mad,elezovic_mode} the binomial case was carried from
	the closed form to the \emph{complete asymptotic expansion}: since $Np$ is generally not a
	lattice point, the coefficients cannot be constants, and they turn out to be Bernoulli
	polynomials evaluated at the oscillating displacement $h_N=\lceil Np\rceil-Np$, given in
	closed form to all orders.  Two structural facts emerged there.  First, the elementary
	(non-Bernoulli) tail of the underlying Stirling expansion is removed \emph{exactly} by
	De~Moivre's prefactor $\nu$, and the mechanism is the size-bias identity
	$m\,b(m;N,p)=Np\,b(m-1;N-1,p)$: the prefactor \emph{is} the size-bias factor
	\cite{elezovic_mode}.  Second, at an integer mean the expansion collapses to a
	sign-alternating series in odd powers of $N^{-1}$, which is \emph{enveloping}: successive
	truncations bracket the value, by a sign-definite Binet-kernel representation of the combined
	Stirling remainder \cite{elezovic_mad}.

	The present paper extends these mechanisms to the Poisson, negative binomial and
	hypergeometric laws.  The first result is a common telescoping identity
	\[
		(k-\mu)P(k)=g(k)P(k)-g(k+1)P(k+1)
	\]
	with $g$ of the degree dictated by the Katz--Ord ratio.  It gives
	$\E|X-\mu|=2g(\nu)P(\nu)$, $\nu=\lceil\mu\rceil$, and through the standard size-bias
	identities the four closed forms assume the uniform shape
	\[
		\E|X-\mu|=2\,\sigma^{2}\,P^{\downarrow}\{\nu-1\},
	\]
	where $P^{\downarrow}$ is the corresponding size-biased law.

	The second result is the complete asymptotic expansion for the three non-binomial laws.  The
	coefficients are Bernoulli polynomials at the lattice defect of the mean, with law-specific
	weights.  In the Poisson case,
	\[
		\E|X-\lambda|\;\sim\;\sqrt{\frac{2\lambda}{\pi}}\;
		\exp\Bigl(-\sum_{n\ge1}\frac{B_{n+1}(\{\lambda\})}{n(n+1)}\,\frac1{\lambda^{n}}\Bigr);
	\]
	there is one Bernoulli polynomial per order and no elementary cancellation.  In the negative
	binomial and hypergeometric cases the elementary tail is removed exactly by the collapse
	prefactor $g(\nu)$, respectively by one and by two size-bias factors.

	The third result concerns integer means.  In each law the expansion reduces to a
	sign-alternating series in odd powers, and the series is enveloping.  The proof is reduced to
	one Binet-kernel statement, Proposition~\ref{prop:skeleton}: the logarithm of the normalised
	mean absolute deviation is $\int_0^\infty\varphi(t)\Delta(t)\,dt$, where the law-specific
	combination $\Delta$ of exponentials is strictly negative.

	Positioning with respect to the literature is discussed in Section~\ref{sec:conclusion}; in
	brief, the closed forms of Section~\ref{sec:collapse} are classical case by case
	(Ramasubban~\cite{ramasubban1958}, Crow~\cite{crow1958}, Katti~\cite{katti1960},
	Kamat~\cite{kamat1965}), and what we claim there is the uniform statement and the
	telescoping that produces it, while the complete expansions and the enveloping theorems
	appear to be new beyond the binomial case.

\section{Preliminaries}\label{sec:prelim}

	$B_k$ and $B_k(t)$ denote the Bernoulli numbers and polynomials in the standard
	normalisation
	\[
		\frac{z\,e^{tz}}{e^{z}-1}=\sum_{k\ge0}B_k(t)\,\frac{z^k}{k!},
		\qquad B_k=B_k(0),\qquad B_1=-\tfrac12 ,
	\]
	with the two structural identities
	\begin{equation}\label{eq:appell}
		B_m(t+1)-B_m(t)=m\,t^{m-1},
		\qquad
		B_m(1-t)=(-1)^{m}B_m(t) .
	\end{equation}
	In particular $B_m(1)=B_m$ for $m\ge2$.

	Asymptotic statements are complete and uniform in the sense of
	\cite[\S1]{elezovic_mad}: $f\sim g\sum c_nx^{-n}$ means that for every fixed $M$,
	$f=g(\sum_{n\le M}c_nx^{-n}+O(x^{-M-1}))$ with the implied constant uniform over the
	parameter compacts specified in the statement; when the answer is displayed as the
	exponential of a series, the logarithmic series is truncated at a fixed order and the
	exponential re-expanded, the two forms carrying the same information through the standard
	recursion: if $\exp(\sum_{k\ge1}a_kx^{-k})=\sum_{m\ge0}d_mx^{-m}$ then
	\begin{equation}\label{eq:bell}
		d_0=1,\qquad d_m=\frac1m\sum_{k=1}^{m}k\,a_k\,d_{m-k}.
	\end{equation}

	We shall use two results of \cite{elezovic_mad} as black boxes.  The first is the shifted
	(Appell) form of Stirling's series,
	\begin{equation}\label{eq:stirling-shift}
	\begin{aligned}
		\log\Gamma(x+t)
		&\sim\Bigl(x+t-\tfrac12\Bigr)\log x-x+\tfrac12\log2\pi\\
		&\quad+\sum_{n\ge1}\frac{(-1)^{n+1}B_{n+1}(t)}{n(n+1)}\,\frac1{x^{n}},
		\qquad x\to\infty,
	\end{aligned}
	\end{equation}
	with, for every $M$, a remainder $O(x^{-M-1})$ uniform for the shift $t$ in compact sets
	\cite[\S2]{elezovic_mad}.  The second is the expansion of a gamma quotient with unequal
	scalings, which we restate for convenience.

\begin{lemma}[{\cite[Lemma 2.1]{elezovic_mad}}]\label{lem:unequal}
	Let $\lambda_j,\mu_k>0$ and $u_j,v_k\in\R$, where $1\le j\le\rho$ and
	$1\le k\le\sigma$.  Put
	\[
		F(x)=\frac{\prod_j\Gamma(\lambda_jx+u_j)}
			{\prod_k\Gamma(\mu_kx+v_k)} .
	\]
	If $\Lambda:=\sum_j\lambda_j-\sum_k\mu_k=0$, then, as $x\to\infty$,
	\begin{equation}\label{eq:logF}
		\log F(x)\sim \Theta\,x+U\log x+K
		+\sum_{n\ge1}\frac{(-1)^{n+1}}{n(n+1)}\,S_{n+1}\,\frac{1}{x^{n}},
	\end{equation}
	where
	\[
		\Theta=\sum_j\lambda_j\log\lambda_j-\sum_k\mu_k\log\mu_k,\qquad
		U=\sum_j\Bigl(u_j-\tfrac12\Bigr)-\sum_k\Bigl(v_k-\tfrac12\Bigr),
	\]
	\[
		K=\sum_j\Bigl(u_j-\tfrac12\Bigr)\log\lambda_j
		 -\sum_k\Bigl(v_k-\tfrac12\Bigr)\log\mu_k+\frac{\rho-\sigma}{2}\log2\pi,
	\]
	\begin{equation}\label{eq:Sn}
		S_{n+1}=\sum_{j}\lambda_j^{-n}B_{n+1}(u_j)-\sum_{k}\mu_k^{-n}B_{n+1}(v_k),
	\end{equation}
	and, for every $M$, the remainder after $M$ series terms is $O(x^{-M-1})$, uniformly for the
	shifts in compact sets and the scalings in compact subsets of $(0,\infty)$.
\end{lemma}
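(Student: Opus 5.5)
The plan is to apply the shifted Stirling series \eqref{eq:stirling-shift} to each factor of $F$ separately and then collect terms. For the numerator factor $\Gamma(\lambda_jx+u_j)$ we use the variable $y=\lambda_jx\to\infty$ with shift $u_j$:
\[
\log\Gamma(\lambda_jx+u_j)\sim\Bigl(\lambda_jx+u_j-\tfrac12\Bigr)\bigl(\log\lambda_j+\log x\bigr)-\lambda_jx+\tfrac12\log2\pi
+\sum_{n\ge1}\frac{(-1)^{n+1}B_{n+1}(u_j)}{n(n+1)}\,\frac{\lambda_j^{-n}}{x^{n}} .
\]
The denominator factors are treated the same way, with $\mu_k,v_k$ in place of $\lambda_j,u_j$ and an overall minus sign. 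We then sum over $j$ and subtract the sum over $k$, sorting the result by type of term.

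\emph{Linear terms.} The $x\log x$ terms have total coefficient $\sum_j\lambda_j-\sum_k\mu_k=\Lambda$. The $-x$ terms have total coefficient $-\Lambda$. Both vanish under the hypothesis $\Lambda=0$. What remains at order $x$ is $\bigl(\sum_j\lambda_j\log\lambda_j-\sum_k\mu_k\log\mu_k\bigr)x=\Theta x$.

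\emph{Logarithmic and constant terms.} The coefficient of $\log x$ is $\sum_j(u_j-\tfrac12)-\sum_k(v_k-\tfrac12)=U$. The constant collects the terms $(u_j-\tfrac12)\log\lambda_j$, minus the corresponding terms $(v_k-\tfrac12)\log\mu_k$, plus $\rho-\sigma$ copies of $\tfrac12\log2\pi$. This is exactly $K$.

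\emph{Series terms.} The coefficient of $x^{-n}$ is $\frac{(-1)^{n+1}}{n(n+1)}\bigl(\sum_j\lambda_j^{-n}B_{n+1}(u_j)-\sum_k\mu_k^{-n}B_{n+1}(v_k)\bigr)$. This is $\frac{(-1)^{n+1}}{n(n+1)}S_{n+1}$, which gives \eqref{eq:logF} with \eqref{eq:Sn}.

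The main obstacle is uniformity of the remainder. For each factor, \eqref{eq:stirling-shift} truncated after $M$ terms leaves a remainder $O\bigl((\lambda_jx)^{-M-1}\bigr)$. This holds uniformly for the shift $u_j$ in a compact set and for $\lambda_jx\ge y_0$, with a fixed threshold $y_0$. If $\lambda_j$ ranges over a compact subset of $(0,\infty)$, then $\lambda_j\ge\lambda_{\min}>0$, so $(\lambda_jx)^{-M-1}\le\lambda_{\min}^{-M-1}x^{-M-1}$. The condition $\lambda_jx\ge y_0$ is then guaranteed once $x\ge y_0/\lambda_{\min}$.

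The upper bound on the scalings keeps the coefficients $\lambda_j^{-n}$, $\log\lambda_j$ and $\lambda_j\log\lambda_j$ bounded. Hence re-expressing the truncation in powers of $x$ introduces no non-uniform constants. Finitely many factors contribute, so the total remainder is $O(x^{-M-1})$ with an implied constant uniform over the stated compacts.

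The only other point to check is that the linear terms cancel exactly rather than merely asymptotically. They do, because they are the same expression in every factor and $\Lambda=0$ is an exact identity.
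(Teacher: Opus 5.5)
Your proof is correct. Substituting $\lambda_jx$ (resp.\ $\mu_kx$) into the shifted Stirling series \eqref{eq:stirling-shift} and sorting terms produces exactly $\Theta$, $U$, $K$ and $S_{n+1}$, with the $x\log x$ and $-x$ contributions removed by $\Lambda=0$. Because the truncated sums re-sort exactly, the remainder of $\log F$ is just the signed sum of the individual Stirling remainders, so only the lower bound on the scalings is needed for the uniform $O(x^{-M-1})$.

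The paper gives no proof of its own here. It imports the lemma from \cite{elezovic_mad} as a black box, alongside \eqref{eq:stirling-shift} itself, and your factor-by-factor derivation is the standard argument behind it.
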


	Finally, one elementary lemma on logarithmic factors, used for the tail cancellations; it is
	the argument of \cite[\S2]{elezovic_mode} in the form we need.

\begin{lemma}\label{lem:logfactor}
	Let $\alpha$ range over a compact subset of $(0,\infty)$ and $h$ over a compact subset of
	$[0,\infty)$.  Then, for $x$ so large that $h/(\alpha x)\le\tfrac12$ on the compacts,
	\begin{equation}\label{eq:logfactor}
		\sum_{k=1}^{M}\frac{(-1)^{k-1}}{k}\Bigl(\frac{h}{\alpha x}\Bigr)^{k}
		=\log\Bigl(1+\frac{h}{\alpha x}\Bigr)+O\bigl(x^{-M-1}\bigr),
	\end{equation}
	for every $M\ge1$, with an implied constant depending only on $M$ and the compacts; and the
	factor $1+h/(\alpha x)$ is bounded above and below by positive constants there.
\end{lemma}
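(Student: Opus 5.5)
The statement to prove is Lemma~\ref{lem:logfactor}. The plan is to reduce everything to the Taylor series of $\log(1+y)$ at $y=h/(\alpha x)$, with an explicit remainder bound that is uniform on $|y|\le\tfrac12$. First put $y=h/(\alpha x)$. Let $\alpha\in[a_0,a_1]\subset(0,\infty)$ and $h\in[0,H]$. Then $0\le y\le H/(a_0x)$, and by hypothesis $y\le\tfrac12$ once $x\ge x_0:=2H/a_0$. The uniformity comes entirely from this single bound $y\le C/x$ with $C=H/a_0$, which depends only on the compacts.

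The core step is the remainder estimate for the alternating logarithmic series. For $0\le y\le\tfrac12$ we have
\[
\log(1+y)-\sum_{k=1}^{M}\frac{(-1)^{k-1}}{k}y^k=\sum_{k>M}\frac{(-1)^{k-1}}{k}y^k .
\]
Bound the tail in absolute value by $\sum_{k>M}y^k=y^{M+1}/(1-y)\le 2y^{M+1}$. Alternatively, use the alternating-series bound $y^{M+1}/(M+1)$, which is valid since the terms decrease for $y\in[0,1]$; either bound suffices. Substituting $y\le C/x$ then gives a remainder at most $2C^{M+1}x^{-M-1}$. This is \eqref{eq:logfactor}, with an implied constant depending only on $M$, $a_0$ and $H$. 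One could instead use the integral form of the remainder, $\int_0^y(-t)^M(1+t)^{-1}\,dt$, which yields the same bound.

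For the final claim, note that $1\le 1+h/(\alpha x)\le\tfrac32$ on the stated range, since $h\ge0$ and $y\le\tfrac12$. So the factor is bounded above and below by positive constants, and consequently its logarithm is bounded as well, which is the form used later when such factors are moved between prefactor and exponent.

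There is no real obstacle here. The only point needing care is that the constant must not depend on $x$, $h$ or $\alpha$ individually, only on the compacts and $M$. This is guaranteed by the uniform bound $y\le C/x$ together with the $y$-uniform Taylor remainder on $[0,\tfrac12]$, so I would state these two facts explicitly and combine them.
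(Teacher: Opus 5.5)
Your proposal is correct and follows the paper's proof. Both arguments expand $\log(1+y)$ at $y=h/(\alpha x)\le C/x$ and bound the tail uniformly for $0\le y\le\tfrac12$; the paper uses the bound $2y^{M+1}/(M+1)$, where your $2y^{M+1}$ or the alternating-series bound works equally well, and your explicit bounds $1\le 1+y\le\tfrac32$ settle the final claim, which the paper leaves implicit.
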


\begin{proof}
	The series $\sum_{k\ge1}(-1)^{k-1}z^k/k=\log(1+z)$ converges for $|z|\le\tfrac12$ with tail
	bounded by $2|z|^{M+1}/(M+1)$; put $z=h/(\alpha x)=O(x^{-1})$.
\end{proof}

\section{The collapse}\label{sec:collapse}

	Throughout this section $X$ is one of
	\[
		\Bin(N,p),\qquad \Poi(\lambda),\qquad \NB(r,p),\qquad \Hyp(N,K,n),
	\]
	with the negative binomial in the \emph{failures} parametrisation,
	$P(k)=\binom{k+r-1}{k}p^{r}q^{k}$, $q=1-p$, mean $\mu=rq/p$; and the hypergeometric
	$P(k)=\binom Kk\binom{N-K}{n-k}\big/\binom Nn$, mean $\mu=nK/N$.  In each case write
	$P(k)=\Pr\{X=k\}$ and let $\mu$ denote the mean.  We assume the non-degeneracy conditions $0<p<1$, $\lambda>0$, $r>0$ and, for the hypergeometric, $1\le K\le N-1$ and $1\le n\le N-1$; these keep the parameters and the size-bias-shifted laws of Theorem~\ref{thm:universal} well defined.  In the excluded boundary cases the mean absolute deviation vanishes and the identities hold trivially.

\begin{theorem}[One telescoping for the family]\label{thm:collapse}
	Define
	\begin{equation}\label{eq:g-table}
		g(k)=
		\begin{cases}
			kq, & X\sim\Bin(N,p),\\[2pt]
			k, & X\sim\Poi(\lambda),\\[2pt]
			k/p, & X\sim\NB(r,p),\\[4pt]
			\dfrac{k\,(k+N-K-n)}{N}, & X\sim\Hyp(N,K,n).
		\end{cases}
	\end{equation}
	Then in each case
	\begin{equation}\label{eq:tele}
		(k-\mu)\,P(k)=g(k)P(k)-g(k+1)P(k+1)\qquad(k\ge0),
	\end{equation}
	and consequently, for \emph{every} integer $\nu\ge0$,
	\begin{equation}\label{eq:tele-sum}
		\sum_{k\ge\nu}(k-\mu)\,P(k)=g(\nu)\,P(\nu) .
	\end{equation}
	In particular, with $\nu=\lceil\mu\rceil$,
	\begin{equation}\label{eq:collapse}
		\E\,|X-\mu|=2\,g(\nu)\,P(\nu).
	\end{equation}
	When $\mu\in\Z$, the two choices $\nu=\mu$ and $\nu=\mu+1$ in \eqref{eq:collapse} give the
	same value.
\end{theorem}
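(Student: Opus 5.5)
The proof splits into three steps: verify the one-step identity \eqref{eq:tele} law by law, telescope it to get \eqref{eq:tele-sum}, and then use $\E(X-\mu)=0$ to obtain \eqref{eq:collapse}.

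\textbf{Step 1: the one-step identity.} Dividing \eqref{eq:tele} by $P(k)$ on the support, it is equivalent to
\[
g(k)-(k-\mu)=g(k+1)\,\frac{P(k+1)}{P(k)} .
\]
The right side is computed from the explicit ratios:
\begin{itemize}
\item Binomial: $P(k+1)/P(k)=\frac{(N-k)p}{(k+1)q}$. Then $(k+1)q\cdot\frac{(N-k)p}{(k+1)q}=(N-k)p$, while the left side is $kq-k+Np=(N-k)p$.
\item Poisson: the ratio is $\lambda/(k+1)$, so the right side is $\lambda$. The left side is $k-k+\lambda=\lambda$.
\item Negative binomial: the ratio is $\frac{(k+r)q}{k+1}$, so the right side is $(k+r)q/p$. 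The left side is $k/p-k+rq/p=(kq+rq)/p$.
\item Hypergeometric: the ratio is $\frac{(K-k)(n-k)}{(k+1)(N-K-n+k+1)}$, so the right side is $(K-k)(n-k)/N$. The left side is $\frac{k(k+N-K-n)}{N}-k+\frac{nK}{N}=\frac{k^2-kK-kn+nK}{N}=\frac{(K-k)(n-k)}{N}$.
\end{itemize}

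\textbf{Support edges.} Points where $P(k)=0$ need separate handling. At $k=-1$ we have $g(0)=0$, which is consistent with the identity. Above the top of the support we need $g(k+1)P(k+1)=0$ whenever $P(k)\neq0$ and $P(k+1)=0$; this holds because the factors $(N-k)$, $(K-k)$, $(n-k)$ in the ratio vanish there. For the hypergeometric lower edge $k<n-(N-K)$, the factor $k+N-K-n$ in $g$ vanishes at $k=n-(N-K)$. Together these show $g(k)P(k)-g(k+1)P(k+1)$ correctly telescopes, including $g(m)P(m)=0$ at the lowest support point $m$. A cleaner way to organise this is to write the identity as $g(k+1)P(k+1)=(\text{polynomial})\cdot P(k)$ with $P$ extended by zero, and check it at every integer.

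\textbf{Step 2: telescoping.} Summing \eqref{eq:tele} over $k\ge\nu$ gives \eqref{eq:tele-sum}, provided $g(k)P(k)\to0$. For bounded supports this is trivial. For the Poisson and negative binomial laws it follows from finite first moments.

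\textbf{Step 3: the collapse.} Since $\E(X-\mu)=0$, we have
\[
\E|X-\mu|=2\sum_{k\ge\nu}(k-\mu)P(k)
\]
for any integer $\nu$ with the terms $k<\nu$ satisfying $k\le\mu$ and the terms $k\ge\nu$ satisfying $k\ge\mu$; $\nu=\lceil\mu\rceil$ is such a choice.

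When $\mu\in\Z$, the term $k=\mu$ contributes zero, so $\nu=\mu$ and $\nu=\mu+1$ give the same sum. Equivalently, \eqref{eq:tele} at $k=\mu$ gives $g(\mu)P(\mu)=g(\mu+1)P(\mu+1)$ directly.

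The main obstacle is the hypergeometric case: both the algebraic factorisation in Step 1 and the bookkeeping at the lower and upper support boundaries. I would handle the boundaries by checking the polynomial identity $g(k+1)P(k+1)=\frac{(K-k)(n-k)}{N}P(k)$ on all of $\Z$, using $\binom{a}{b}=0$ outside $0\le b\le a$.
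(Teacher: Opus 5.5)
Your proof is correct and follows essentially the same route as the paper. Both divide by $P(k)$ and check $g(k)-g(k+1)\rho_k=k-\mu$ law by law, then telescope, then use $\E(X-\mu)=0$ to pass to \eqref{eq:collapse}. Your explicit treatment of the support edges (in particular $g(n+K-N)=0$ at the hypergeometric lower edge) and your use of the finite first moment to make $g(L+1)P(L+1)\to0$ are, if anything, slightly more careful than the paper's one-line treatment of these points.
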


\begin{proof}
	Write $\rho_k:=P(k+1)/P(k)$ wherever $P(k)>0$.  Dividing \eqref{eq:tele} by $P(k)$, the
	claim is the identity
	\begin{equation}\label{eq:tele-crit}
		g(k)-g(k+1)\,\rho_k=k-\mu .
	\end{equation}
	We verify \eqref{eq:tele-crit} case by case; outside the support both sides of
	\eqref{eq:tele} vanish, so \eqref{eq:tele-crit} on the support suffices.

	\emph{Binomial}: $\rho_k=\dfrac{(N-k)p}{(k+1)q}$, so
	\[
		kq-(k+1)q\cdot\frac{(N-k)p}{(k+1)q}=kq-(N-k)p=k(p+q)-Np=k-\mu .
	\]

	\emph{Poisson}: $\rho_k=\dfrac{\lambda}{k+1}$, so
	$k-(k+1)\cdot\dfrac{\lambda}{k+1}=k-\lambda$.

	\emph{Negative binomial}: $\rho_k=\dfrac{q(k+r)}{k+1}$, so
	\[
		\frac kp-\frac{k+1}{p}\cdot\frac{q(k+r)}{k+1}
		=\frac{k-q(k+r)}{p}=\frac{k(1-q)-qr}{p}=k-\frac{qr}{p}=k-\mu .
	\]

	\emph{Hypergeometric}: write $L:=N-K-n$, so $g(k)=k(k+L)/N$ and
	$\rho_k=\dfrac{(K-k)(n-k)}{(k+1)(L+k+1)}$.  Then
	\begin{align*}
		g(k)-g(k+1)\rho_k
		&=\frac{k(k+L)}{N}
		 -\frac{(k+1)(k+1+L)}{N}\cdot\frac{(K-k)(n-k)}{(k+1)(L+k+1)}\\
		&=\frac{k(k+L)-(K-k)(n-k)}{N}.
	\end{align*}
	Expanding the numerator, the quadratic terms cancel:
	\[
		k^2+kL-\bigl(Kn-(K+n)k+k^2\bigr)=k\,(L+K+n)-Kn=kN-Kn,
	\]
	so $g(k)-g(k+1)\rho_k=k-nK/N=k-\mu$.

	For \eqref{eq:tele-sum}, sum \eqref{eq:tele} over $\nu\le k\le L$; the sum telescopes to
	$g(\nu)P(\nu)-g(L+1)P(L+1)$.  For the binomial and the hypergeometric the support is finite
	and $g(L+1)P(L+1)=0$ once $L+1$ leaves it.  For the Poisson and the negative binomial,
	$g(L+1)$ grows at most quadratically while $P(L+1)$ decays superexponentially
	(resp.\ geometrically, since $\rho_k\to q<1$), so $g(L+1)P(L+1)\to0$ and the sum converges
	absolutely.

	For \eqref{eq:collapse}: since $\E(X-\mu)=0$ we have
	$\E|X-\mu|=2\,\E(X-\mu)^{+}=2\sum_{k>\mu}(k-\mu)P(k)$.  If $\mu\notin\Z$ the summation range
	$k>\mu$ is exactly $k\ge\lceil\mu\rceil$; if $\mu\in\Z$ the term $k=\mu$ vanishes, so the sum
	over $k\ge\mu$ and over $k\ge\mu+1$ agree, which also proves the final claim by
	\eqref{eq:tele-sum}.
\end{proof}

\begin{remark}\label{rem:ord-degree}
	The degree of $g$ matches the degree of the numerator and denominator of the Katz--Ord ratio
	$\rho_k$: linear for the three laws with linear ratio, quadratic for the hypergeometric.
	This is forced by \eqref{eq:tele-crit}: if $\rho_k=A(k)/B(k)$ with $\deg A=\deg B=\delta$ and
	equal leading coefficients (as here), then $g$ of degree $\delta$ is exactly what makes the
	leading terms of $g(k)B(k)-g(k+1)A(k)$ cancel down to the linear right-hand side.
\end{remark}

	The closed forms \eqref{eq:collapse} are classical case by case --- in particular the
	hypergeometric case of \eqref{eq:collapse}, with the quadratic $g$, is exactly formula (3.3)
	of Ramasubban~\cite[p.~554]{ramasubban1958}; see Section~\ref{sec:conclusion} for the full
	attributions.  The telescoping \eqref{eq:tele} is not new either: summing it is the cumulative
	(``collapse'') identity of the discrete Pearson\,/\,Ord family,
	$\sum_{j\le x}(\mu-j)P(j)=q(x)P(x)$ where $q$ is the variance function of the family --- of degree at most two, quadratic for the
	hypergeometric and linear or constant for the other three --- and our carrier $g$ is its
	equivalent form, $q(x)=g(x+1)\rho_x$.  This is the discrete $w$-function of Cacoullos and
	Papathanasiou~\cite{cacoullos_pap1989}; it is tied directly to the mean absolute deviation by
	Korwar~\cite{korwar1991}, and its difference form $\Delta[q(x)P(x)]=(\mu-(x+1))P(x+1)$,
	equivalent to \eqref{eq:tele}, is that of Afendras, Papadatos and
	Papathanasiou~\cite[eq.~(2.1)]{afendras_pp2011}; the Ord degree of $g$
	(Remark~\ref{rem:ord-degree}) is intrinsic to it.  What we take from \eqref{eq:tele} is its use
	for the mean absolute deviation: the single passage to the collapse \eqref{eq:collapse} and,
	through the size-bias identities, to the uniform form of Theorem~\ref{thm:universal}.

\begin{theorem}[The universal form]\label{thm:universal}
	In each of the four cases, with $\nu=\lceil\mu\rceil$ and $\sigma^{2}$ the variance of $X$,
	\begin{equation}\label{eq:universal}
		\E\,|X-\mu|\;=\;2\,\sigma^{2}\;P^{\downarrow}\{\nu-1\},
	\end{equation}
	where $P^{\downarrow}$ denotes the mass function of
	\[
		\Bin(N-1,p),\qquad
		\Poi(\lambda),\qquad
		\NB(r+1,p),\qquad
		\Hyp(N-2,\,K-1,\,n-1),
	\]
	respectively.
\end{theorem}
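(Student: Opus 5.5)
The plan is to start from the collapse \eqref{eq:collapse} of Theorem~\ref{thm:collapse}, namely $\E|X-\mu|=2g(\nu)P(\nu)$, and verify law by law the pointwise identity
\[
	g(\nu)\,P(\nu)=\sigma^{2}\,P^{\downarrow}\{\nu-1\}.
\]
Under the non-degeneracy assumptions we have $\mu>0$, hence $\nu=\lceil\mu\rceil\ge1$, so $\nu-1$ is a legitimate index. Each verification combines the size-bias (absorption) identity $m\binom{a}{m}=a\binom{a-1}{m-1}$, or its Poisson analogue, with the standard variance formula.

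\emph{Binomial.} Here $g(\nu)=\nu q$ and $\nu\, b(\nu;N,p)=Np\, b(\nu-1;N-1,p)$. This gives $g(\nu)P(\nu)=Npq\, b(\nu-1;N-1,p)$, and $\sigma^2=Npq$.

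\emph{Poisson.} Here $\nu P(\nu)=\lambda P(\nu-1)$ and $\sigma^2=\lambda$.

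\emph{Negative binomial.} Use $\nu\binom{\nu+r-1}{\nu}=r\binom{\nu+r-1}{\nu-1}$ and regroup $p^rq^\nu=(q/p)\,p^{r+1}q^{\nu-1}$. This yields
\[
	\frac{\nu}{p}P(\nu)=\frac{rq}{p^{2}}\binom{(\nu-1)+(r+1)-1}{\nu-1}p^{r+1}q^{\nu-1}.
\]
The right-hand side is $\sigma^2$ times the $\NB(r+1,p)$ mass at $\nu-1$, since $\sigma^2=rq/p^2$.

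\emph{Hypergeometric.} This is the case needing two size-bias factors, and it is where I expect the only real bookkeeping. Write $g(\nu)=\nu(\nu+N-K-n)/N$ and split it so that each linear factor is absorbed by one binomial coefficient:
\[
	\nu\binom K\nu=K\binom{K-1}{\nu-1},
	\qquad
	(N-K-n+\nu)\binom{N-K}{n-\nu}=(N-K)\binom{N-K-1}{n-\nu}.
\]
The second identity is absorption applied to the lower index $N-K-(n-\nu)$. For the denominator use
\[
	\binom{N-2}{n-1}=\binom Nn\frac{n(N-n)}{N(N-1)}.
\]
Since $n-\nu=(n-1)-(\nu-1)$ and $(N-2)-(K-1)=N-K-1$, the combined result is
\[
	g(\nu)P(\nu)=\frac{nK(N-K)(N-n)}{N^{2}(N-1)}\cdot
	\frac{\binom{K-1}{\nu-1}\binom{N-K-1}{n-\nu}}{\binom{N-2}{n-1}}.
\]
The prefactor is exactly the hypergeometric variance $\sigma^2$, and the quotient is the $\Hyp(N-2,K-1,n-1)$ mass at $\nu-1$. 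The assumptions $1\le K\le N-1$ and $1\le n\le N-1$ guarantee $N\ge2$ and that the shifted law is well defined.

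Finally, I would remark that the binomial-coefficient identities hold with the usual convention $\binom{a}{m}=0$ outside $0\le m\le a$. Hence no separate treatment is needed when $\nu$ lies at an edge of the support: in that case both sides vanish simultaneously. Substituting each identity into \eqref{eq:collapse} gives \eqref{eq:universal}.
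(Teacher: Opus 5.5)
Your proposal is correct and follows the paper's proof. Both start from the collapse $\E|X-\mu|=2g(\nu)P(\nu)$ and absorb $g(\nu)$ into the mass law by law via size-bias (absorption) identities. The only difference is bookkeeping in the hypergeometric case. The paper performs the two absorptions sequentially through the intermediate law $\Hyp(N-1,K-1,n-1)$: a sample size bias, then a complement size bias. You instead absorb both linear factors into the numerator binomials at once and adjust the denominator with $\binom{N-2}{n-1}=\binom Nn\,n(N-n)/(N(N-1))$, and this yields the same identity.
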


\begin{proof}
	In each case we transform $g(\nu)P(\nu)$ by absorption identities.

	\emph{Binomial.}  $\nu\binom N\nu=N\binom{N-1}{\nu-1}$ gives
	$\nu\,b(\nu;N,p)=Np\,b(\nu-1;N-1,p)$, so
	\[
		g(\nu)P(\nu)=\nu q\,b(\nu;N,p)
		=Npq\,b(\nu-1;N-1,p)=\sigma^{2}P^{\downarrow}\{\nu-1\}.
	\]

	\emph{Poisson.}  $\nu P(\nu)=\nu e^{-\lambda}\lambda^{\nu}/\nu!
	=\lambda\,e^{-\lambda}\lambda^{\nu-1}/(\nu-1)!=\lambda P(\nu-1)$, and $\sigma^{2}=\lambda$.

	\emph{Negative binomial.}  From
	\[
		k\binom{k+r-1}{k}=r\binom{k+r-1}{k-1}
	\]
	and $\binom{k+r-1}{k-1}=\binom{(k-1)+(r+1)-1}{k-1}$,
	\[
	\begin{aligned}
		k\,P(k;r,p)
		&=r\binom{k+r-1}{k-1}p^{r}q^{k}\\
		&=\frac{rq}{p}\,\binom{(k-1)+(r+1)-1}{k-1}p^{r+1}q^{k-1}\\
		&=\frac{rq}{p}\,P(k-1;r+1,p).
	\end{aligned}
	\]
	Hence $g(\nu)P(\nu)=(\nu/p)P(\nu)=(rq/p^{2})P(\nu-1;r+1,p)$, and $\sigma^{2}=rq/p^{2}$.

	\emph{Hypergeometric.}  Two absorption steps are needed, one per factor of the quadratic
	$g(\nu)=\nu(\nu+L)/N$, $L=N-K-n$.  First, the sample size bias
	\[
		k\,P(k;N,K,n)=\frac{nK}{N}\,P(k-1;\,N-1,K-1,n-1),
	\]
	which follows from $k\binom Kk=K\binom{K-1}{k-1}$,
	$\binom{N-K}{n-k}=\binom{(N-1)-(K-1)}{(n-1)-(k-1)}$ and
	$\binom Nn=\tfrac Nn\binom{N-1}{n-1}$.  Applying it at $k=\nu$,
	\[
	\begin{aligned}
		g(\nu)P(\nu)
		&=\frac{\nu+L}{N}\cdot\nu P(\nu)
		=\frac{\mu\,(\nu+L)}{N}\;P'(\nu-1),\\
		P'&:=\Hyp(N-1,K-1,n-1).
	\end{aligned}
	\]
	Second, the complement size bias for $P'$.  Write $N'=N-1$, $K'=K-1$, $n'=n-1$ and
	$j=\nu-1$; then
	\[
		N'-K'-n'+j=N-K-n+\nu=\nu+L ,
	\]
	so the factor $\nu+L$ is exactly the count of the fourth cell of $P'$ at $j$.  From
	$(m-i)\binom mi=m\binom{m-1}i$ with $m=N'-K'$, $i=n'-j$, and
	$\binom{N'-1}{n'}\big/\binom{N'}{n'}=(N'-n')/N'$,
	\[
	\begin{aligned}
		(N'-K'-n'+j)\,P'(j)
		&=(N'-K')\,\frac{\binom{K'}{j}\binom{N'-K'-1}{n'-j}}{\binom{N'}{n'}}\\
		&=\frac{(N'-K')(N'-n')}{N'}\;P''(j),
	\end{aligned}
	\]
	where $P'':=\Hyp(N'-1,K',n')$.
	With $N'-K'=N-K$, $N'-n'=N-n$, $N'=N-1$ and $P''=\Hyp(N-2,K-1,n-1)$, combining the two
	steps gives
	\[
	\begin{aligned}
		g(\nu)P(\nu)
		&=\frac{\mu}{N}\cdot\frac{(N-K)(N-n)}{N-1}\;P''(\nu-1)\\
		&=\frac{nK(N-K)(N-n)}{N^{2}(N-1)}\;P''(\nu-1)
		=\sigma^{2}\,P''(\nu-1).
	\end{aligned}
	\]
\end{proof}

\begin{remark}\label{rem:universal-reading}
	Identity \eqref{eq:universal} is the family-wide version of the size-bias reading of
	De~Moivre's formula in \cite{elezovic_mode}: the mean absolute deviation is twice the
	variance times a single mass of the size-bias-shifted law, evaluated one step below the
	ceiling of the mean.  In the language of the discrete $w$-function this is Korwar's
	mean-deviation identity~\cite{korwar1991}, $\tfrac12\E|X-\mu|=q(\nu-1)P(\nu-1)$, read through the
	size-bias transform so that the parameter shift becomes explicit for each member.  The Poisson
	needs no shift because it is its own size bias; the
	hypergeometric needs two, one from the sample and one from the complement --- and, as we
	shall see in Section~\ref{sec:hyp}, this doubling is precisely mirrored in the asymptotics,
	where the quadratic $g$ supplies \emph{two} elementary log-factors, one for each diagonal
	cell of the $2\times2$ table.  For the asymptotics it is \eqref{eq:collapse} that we use;
	\eqref{eq:universal} identifies the normalisation in which the expansions come out pure.
\end{remark}

\begin{remark}[The exact repair of an approximate identity]\label{rem:katz-approx}
	A unified mean-deviation formula for the Katz family (binomial, Poisson, negative binomial)
	is in fact on record: summing the basic difference identity of the family gives
	\[
		\E|X-\mu|\;\approx\;2\,\sigma^{2}\,P\{X=\lfloor\mu\rfloor\},
	\]
	\emph{with an explicit error term}, the formula being exact only when $\mu\in\Z$ or in the
	Poisson case; see \cite[eq.\ (2.53)]{jkk}, Bardwell~\cite{bardwell1960} and
	Kamat~\cite{kamat1965}.  Theorem~\ref{thm:universal} is the exact repair of this statement,
	across the full family including the hypergeometric: replacing the law by its
	size-bias-shifted version and evaluating at $\nu-1$ absorbs the error entirely, at every
	$\mu$.
\end{remark}

\begin{remark}[The continuous ancestor]\label{rem:bortkiewicz}
	The shape ``mean absolute deviation $=$ twice the variance times a density value at the
	mean'' has a continuous ancestor: von Bortkiewicz (1923) observed that the ratio
	$\E|X-\mu|\big/\bigl(2\sigma^{2}f(\mu)\bigr)$ equals $1$ \emph{exactly} for the normal,
	gamma and exponential laws, and Pearson-family generalisations followed; see Diaconis and
	Zabell \cite[\S3.3]{diaconis_zabell}, who also record that Ramasubban rediscovered the
	phenomenon for the Poisson.  In the continuous cases exactness holds for some families and
	fails by an explicit ratio for others (e.g.\ $(\alpha+\beta+1)/(\alpha+\beta)$ for the
	beta).  Identity \eqref{eq:universal} is the lattice-exact discrete counterpart: exactness
	holds for \emph{all four} classical laws, the correction being absorbed not by a ratio but
	by the size-bias parameter shift and the evaluation at $\nu-1$.
\end{remark}

\section{The Poisson law}\label{sec:poisson}

	Let $X\sim\Poi(\lambda)$ and write
	\[
		\theta:=\{\lambda\}\in[0,1),\qquad m:=\lfloor\lambda\rfloor,
	\]
	so that $\nu=\lceil\lambda\rceil$ and $\nu-1=m$ for $\lambda\notin\Z$, while for
	$\lambda\in\Z$ both readings of Theorem~\ref{thm:collapse} give, by
	$P(\lambda)=P(\lambda-1)$,
	\begin{equation}\label{eq:poi-closed}
		\E|X-\lambda|=2\lambda\,P\{X=m\}=2\,e^{-\lambda}\frac{\lambda^{m+1}}{m!}
		\qquad\text{in all cases.}
	\end{equation}

\subsection{The complete expansion}

\begin{theorem}\label{thm:poi-expansion}
	As $\lambda\to\infty$ through the reals,
	\begin{equation}\label{eq:poi-mad}
		\E|X-\lambda|\;\sim\;\sqrt{\frac{2\lambda}{\pi}}\;
		\exp\Bigl(\sum_{n\ge1}\frac{a_n(\theta)}{\lambda^{n}}\Bigr),
		\qquad
		a_n(\theta)=-\frac{B_{n+1}(\theta)}{n(n+1)} ,
	\end{equation}
	in the following sense: for every $M$,
	\[
		\E|X-\lambda|=\sqrt{\frac{2\lambda}{\pi}}
		\Bigl(\sum_{n=0}^{M}\frac{d_n(\theta)}{\lambda^{n}}+O(\lambda^{-M-1})\Bigr),
	\]
	where the $d_n$ are obtained from the $a_n$ by \eqref{eq:bell}, and the implied constant
	depends only on $M$ --- the estimate is uniform in $\theta\in[0,1)$, hence in $\lambda$ over any
	$[\lambda_0,\infty)$ with $\lambda_0>0$.  The first multiplicative coefficients are
	\[
		d_1=-\tfrac12B_2(\theta),\qquad
		d_2=-\tfrac16B_3(\theta)+\tfrac18B_2(\theta)^{2}.
	\]
\end{theorem}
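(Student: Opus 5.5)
Starting point is the closed form \eqref{eq:poi-closed}: $\E|X-\lambda|=2e^{-\lambda}\lambda^{m+1}/\Gamma(m+1)$ with $m=\lambda-\theta$. Taking logarithms,
\[
	\log\E|X-\lambda|=\log2-\lambda+(m+1)\log\lambda-\log\Gamma(\lambda+1-\theta).
\]
The right move is to expand $\log\Gamma$ with the shifted Stirling series \eqref{eq:stirling-shift}, using $x=\lambda$ and shift $t=1-\theta\in(0,1]$. This shift ranges over a compact set, so the remainder is uniform in $\theta$.

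The elementary terms cancel as follows. From \eqref{eq:stirling-shift},
\[
	\log\Gamma(\lambda+1-\theta)=(\lambda+\tfrac12-\theta)\log\lambda-\lambda+\tfrac12\log2\pi+\sum_{n\ge1}\frac{(-1)^{n+1}B_{n+1}(1-\theta)}{n(n+1)\lambda^n}.
\]
Since $m+1=\lambda-\theta+1$, the logarithmic terms leave $\tfrac12\log\lambda$ and the linear terms cancel. The constants give $\log2-\tfrac12\log2\pi=\tfrac12\log(2/\pi)$, which yields the prefactor $\sqrt{2\lambda/\pi}$. For the series, the reflection identity in \eqref{eq:appell} gives $B_{n+1}(1-\theta)=(-1)^{n+1}B_{n+1}(\theta)$. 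Hence $(-1)^{n+1}B_{n+1}(1-\theta)=B_{n+1}(\theta)$, and the negated series has coefficients $a_n(\theta)=-B_{n+1}(\theta)/(n(n+1))$, as claimed.

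The key step is the passage from the truncated logarithmic series to the multiplicative form. Truncate at order $M$ with remainder $R_M=O(\lambda^{-M-1})$, uniformly in $\theta$. The coefficients $a_n(\theta)$ are bounded on $[0,1]$, so the finite sum $\sum_{n\le M}a_n\lambda^{-n}$ is bounded for $\lambda\ge\lambda_0$. Exponentiating, we have $e^{R_M}=1+O(\lambda^{-M-1})$, and the exponential of the finite sum equals $\sum_{n\le M}d_n\lambda^{-n}+O(\lambda^{-M-1})$ by Taylor expansion with uniformly bounded coefficients. The $d_n$ are generated by \eqref{eq:bell}, and all constants depend only on $M$. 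This gives uniformity in $\theta\in[0,1)$, and so for $\lambda$ in any $[\lambda_0,\infty)$, where small $\lambda$ is handled by compactness and continuity.

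The main subtlety is the treatment of integer $\lambda$ and of the shift endpoint. At $\theta=0$ we have $t=1$, which is included in the compact set $[0,1]$, so the uniform remainder of \eqref{eq:stirling-shift} covers it. Using $B_{n+1}(1)=B_{n+1}$ for $n\ge1$, everything is consistent at this endpoint. Finally, the recursion \eqref{eq:bell} gives $d_1=a_1=-\tfrac12B_2(\theta)$ and $d_2=\tfrac12(a_1^2+2a_2)=\tfrac18B_2^2-\tfrac16B_3$. These match the stated coefficients.
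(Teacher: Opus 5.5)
Your proposal is correct and follows the paper's proof essentially step for step: the closed form \eqref{eq:poi-closed}, the shifted Stirling series with $x=\lambda$ and $t=1-\theta$, cancellation of the elementary terms down to $\log\sqrt{2\lambda/\pi}$, the reflection identity $(-1)^{n+1}B_{n+1}(1-\theta)=B_{n+1}(\theta)$, and exponentiation of the truncation with \eqref{eq:bell} giving $d_1,d_2$. On a bounded range $[\lambda_0,\Lambda]$ it is boundedness of both sides, not continuity, that does the work, since $\theta=\{\lambda\}$ jumps at the integers.
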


\begin{proof}
	By \eqref{eq:poi-closed},
	\[
		\log\E|X-\lambda|=\log(2\lambda)-\lambda+m\log\lambda-\log\Gamma(m+1).
	\]
	Since $m+1=\lambda+(1-\theta)$ with $1-\theta\in(0,1]$ ranging over a compact set, the
	shifted Stirling series \eqref{eq:stirling-shift} applies with $x=\lambda$, $t=1-\theta$:
	\[
	\begin{aligned}
		\log\Gamma(m+1)
		&=\Bigl(\lambda+\tfrac12-\theta\Bigr)\log\lambda-\lambda+\tfrac12\log2\pi\\
		&\quad+\sum_{n=1}^{M}\frac{(-1)^{n+1}B_{n+1}(1-\theta)}
			{n(n+1)}\,\frac1{\lambda^{n}}
		+O(\lambda^{-M-1}),
	\end{aligned}
	\]
	uniformly in $\theta$.  Substituting, with $m\log\lambda=(\lambda-\theta)\log\lambda$, the
	terms $\pm\lambda$ and $\pm\lambda\log\lambda$, $\mp\theta\log\lambda$ cancel, leaving
	\[
	\begin{aligned}
		\log\E|X-\lambda|=\log(2\lambda)-\tfrac12\log\lambda-\tfrac12\log2\pi
		&-\sum_{n=1}^{M}\frac{(-1)^{n+1}B_{n+1}(1-\theta)}{n(n+1)\lambda^{n}}\\
		&+O(\lambda^{-M-1}).
	\end{aligned}
	\]
	The elementary part is $\log\sqrt{2\lambda/\pi}$.  By the reflection identity
	\eqref{eq:appell},
	\[
		(-1)^{n+1}B_{n+1}(1-\theta)=B_{n+1}(\theta),
	\]
	and \eqref{eq:poi-mad} follows.
	Exponentiating a truncation, as in \S\ref{sec:prelim}, preserves the
	relative $O(\lambda^{-M-1})$; the coefficients $d_1,d_2$ follow from \eqref{eq:bell}.
\end{proof}

\begin{remark}\label{rem:poi-clean}
	Two features distinguish the Poisson within the family.  First, \emph{no cancellation is
	needed}: the collapse prefactor in \eqref{eq:poi-closed} is the constant $2\lambda$, not the
	summation index, because the Poisson is its own size bias; correspondingly there is a single
	large gamma argument, and the naive expansion is already pure.  (For the binomial the
	prefactor is $\nu$ and the elementary tail it removes is the price of \emph{two} large gamma
	arguments \cite{elezovic_mad,elezovic_mode}; the negative binomial and hypergeometric below
	behave likewise.)  Second, the coefficient of $\lambda^{-n}$ is a \emph{single} Bernoulli
	polynomial with weight one --- compare the binomial's weight $p^{-n}+(-1)^{n+1}q^{-n}$.

	The parameter is continuous, and this simplifies the averaging of the oscillation: the mean
	of $B_j(\{\lambda\})$ over any period $\lambda\in[\Lambda,\Lambda+1]$ is
	$\int_0^1B_j=0$ exactly, so the continuous Ces\`aro mean of every coefficient is its smooth
	part, with none of the rational/irrational dichotomy of the lattice-parameter case
	\cite[\S6]{elezovic_mad}.  The arithmetic returns only if $\lambda$ is restricted to an
	arithmetic sequence.
\end{remark}

\subsection{The enveloping theorem}

	We first isolate the kernel argument; it is the argument of the enveloping theorem of
	\cite[\S5]{elezovic_mad}, stated here once so that all three laws can use it.  That a series is
	\emph{enveloping} in the sense of P\'olya and Szeg\H o~\cite{polya_szego} --- successive
	truncations bracket the value, the error having the sign of, and being bounded by, the first
	omitted term --- is the sense in which Brent~\cite{brent2020} showed the asymptotic series of
	$\log\binom{2n}{n}$ and of Binet's function to be enveloping; the central binomial coefficient
	is exactly the binomial mean-deviation prefactor.  What is new here is the enveloping of the
	mean absolute deviation of the Poisson, negative binomial and hypergeometric laws, all through
	the single kernel of Proposition~\ref{prop:skeleton}.

	Let
	\begin{equation}\label{eq:phi}
		\varphi(t):=\frac1t\Bigl(\frac1{e^{t}-1}-\frac1t+\frac12\Bigr),
		\qquad t>0,
	\end{equation}
	the kernel of Binet's first formula \cite[\S12.31]{ww}:
	\begin{equation}\label{eq:binet}
	\begin{aligned}
		\log\Gamma(x)
		&=\Bigl(x-\tfrac12\Bigr)\log x-x+\tfrac12\log2\pi+J(x),\\
		J(x)&=\int_0^\infty\varphi(t)\,e^{-xt}\,dt\qquad(x>0).
	\end{aligned}
	\end{equation}

	Here $J$ is Binet's function; we use the letter $J$ rather than the traditional $\mu$ so that
	$\mu$ can denote the mean throughout.

\begin{proposition}[Enveloping skeleton]\label{prop:skeleton}
	Let $\Delta:(0,\infty)\to\R$ be continuous with $\Delta(t)<0$ for all $t>0$ and
	$|\Delta(t)|\le Ce^{-ct}$ for some $c>0$.  Put
	\[
		L:=\int_0^\infty\varphi(t)\,\Delta(t)\,dt,
		\qquad
		c_n:=\frac{B_{2n}}{(2n)!}\int_0^\infty t^{2n-2}\,\Delta(t)\,dt\qquad(n\ge1).
	\]
	All the integrals converge absolutely, and for every $M\ge0$
	\begin{equation}\label{eq:skeleton}
		(-1)^{M+1}\Bigl[L-\sum_{n=1}^{M}c_n\Bigr]\;>\;0 .
	\end{equation}
	Consequently $L$ lies strictly between any two consecutive partial sums of $\sum_nc_n$, and
	$\operatorname{sign}c_n=(-1)^{n}$.
\end{proposition}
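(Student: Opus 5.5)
The plan is to expand $\varphi$ into its Taylor polynomial with a remainder of definite sign, and then integrate against $\Delta$.

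\emph{Convergence.} First, $\varphi$ is bounded and positive on $(0,\infty)$. Near $0$ we have $\varphi(t)\to 1/12$. As $t\to\infty$, $\varphi(t)\sim 1/(2t)$. Together with $|\Delta(t)|\le Ce^{-ct}$, this makes $\int\varphi\Delta$ and every $\int t^{2n-2}\Delta$ absolutely convergent.

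\emph{Expansion of the kernel.} From the generating function of \S\ref{sec:prelim},
\[
\frac1{e^t-1}-\frac1t+\frac12=\sum_{n\ge1}\frac{B_{2n}}{(2n)!}t^{2n-1},
\]
so $\varphi(t)=\sum_{n\ge1}\frac{B_{2n}}{(2n)!}t^{2n-2}$ formally. The key input is the classical sign-definite remainder: for every $M\ge0$ and $t>0$,
\[
\varphi(t)-\sum_{n=1}^{M}\frac{B_{2n}}{(2n)!}t^{2n-2}=(-1)^{M}\,\theta_M(t)\,\frac{|B_{2M+2}|}{(2M+2)!}\,t^{2M},\qquad \theta_M(t)\in(0,1).
\]
I would derive this from the partial fraction expansion
\[
\varphi(t)=2\sum_{k\ge1}\frac{1}{t^2+4\pi^2k^2}.
\]
Expand each $1/(t^2+a^2)$ as a finite geometric sum with exact remainder:
\[
\frac1{t^2+a^2}=\sum_{n=0}^{M-1}\frac{(-1)^n t^{2n}}{a^{2n+2}}+(-1)^M\frac{t^{2M}}{a^{2M}(t^2+a^2)}.
\]
Summing over $k$ with $a=2\pi k$ and using $\zeta(2n)=(-1)^{n+1}(2\pi)^{2n}B_{2n}/(2(2n)!)$ recovers the Bernoulli coefficients. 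The remainder is then
\[
(-1)^M t^{2M}\cdot 2\sum_k \frac{1}{(2\pi k)^{2M}\,(t^2+4\pi^2k^2)},
\]
which is strictly between $0$ and $(-1)^M t^{2M}\cdot 2\sum_k (2\pi k)^{-2M-2}$. This yields the claimed sign $(-1)^M$ and the bound $\theta_M\in(0,1)$.

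\emph{Integration.} Multiply the remainder identity by $\Delta(t)$ and integrate. Since $\Delta<0$ and $\theta_M t^{2M}>0$ on a set of positive measure,
\[
L-\sum_{n=1}^{M}c_n=(-1)^{M}\int_0^\infty \theta_M(t)\frac{|B_{2M+2}|}{(2M+2)!}t^{2M}\Delta(t)\,dt
\]
has sign $(-1)^{M+1}$, which is \eqref{eq:skeleton}.

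\emph{Consequences.} The sign of $c_n$ comes directly from $B_{2n}$ having sign $(-1)^{n+1}$ together with $\int t^{2n-2}\Delta<0$, giving $\operatorname{sign}c_n=(-1)^n$. Bracketing follows because consecutive partial-sum errors have opposite strict signs.

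The main obstacle is justifying the sign-definite remainder cleanly, including the interchange of the sum over $k$ with the finite expansion. The terms are nonnegative and absolutely summable for $M\ge1$, so the interchange is straightforward. The partial-fraction route avoids any appeal to the Euler--Maclaurin remainder.
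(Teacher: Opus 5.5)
Your proposal is correct and follows essentially the same route as the paper: the partial-fraction expansion $\varphi(t)=2\sum_{k\ge1}(t^{2}+4\pi^{2}k^{2})^{-1}$, the truncated geometric series with its exact remainder, Euler's formula for $\zeta(2n)$ to identify the Bernoulli coefficients, and integration of the sign-definite remainder against $\Delta<0$. Your extra bound $\theta_M\in(0,1)$ and your direct reading of $\operatorname{sign}c_n$ from $\operatorname{sign}B_{2n}$ and $\int_0^\infty t^{2n-2}\Delta(t)\,dt<0$ are valid but not needed for the statement; the paper instead obtains the sign of $c_n$ from the bracketing.
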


\begin{proof}
	By the Mittag--Leffler expansion of the hyperbolic cotangent \cite[\S7.4]{ww},
	\[
		\frac1{e^{t}-1}-\frac1t+\frac12=\sum_{k\ge1}\frac{2t}{t^{2}+4\pi^{2}k^{2}},
		\qquad\text{so}\qquad
		\varphi(t)=\sum_{k\ge1}\frac{2}{t^{2}+4\pi^{2}k^{2}} .
	\]
	Truncating the geometric series of each summand after $M$ terms,
	\[
		\frac{2}{t^{2}+4\pi^{2}k^{2}}
		=\sum_{n=1}^{M}\frac{2(-1)^{n-1}t^{2n-2}}{(4\pi^{2}k^{2})^{n}}
		+\frac{2(-1)^{M}t^{2M}}{(4\pi^{2}k^{2})^{M}\,(t^{2}+4\pi^{2}k^{2})},
	\]
	and summing over $k$ with Euler's identity
	\[
		\sum_{k\ge1}2(4\pi^{2}k^{2})^{-n}
		=2\zeta(2n)(2\pi)^{-2n}
		=\frac{(-1)^{n-1}B_{2n}}{(2n)!},
	\]
	we get, for every $t>0$,
	\begin{equation}\label{eq:QM}
		Q_M(t):=\varphi(t)-\sum_{n=1}^{M}\frac{B_{2n}}{(2n)!}\,t^{2n-2}
		=2(-1)^{M}\sum_{k\ge1}\frac{t^{2M}}{(4\pi^{2}k^{2})^{M}\,(t^{2}+4\pi^{2}k^{2})} ,
	\end{equation}
	so that $(-1)^{M}Q_M(t)>0$ pointwise.  From \eqref{eq:QM}, $Q_M(t)=O(t^{2M})$ as $t\to0$ and
	$Q_M(t)=O(t^{2M-2})+O(t^{-1})$ as $t\to\infty$; against $|\Delta|\le Ce^{-ct}$ all integrals
	converge absolutely, and
	\[
		L-\sum_{n=1}^{M}c_n=\int_0^\infty Q_M(t)\,\Delta(t)\,dt .
	\]
	The integrand has the fixed sign $(-1)^{M}\cdot(-1)=(-1)^{M+1}$ and is not identically zero,
	which is \eqref{eq:skeleton}.  Applying \eqref{eq:skeleton} at $M$ and $M+1$ shows that $L$
	lies strictly between the $M$-th and $(M+1)$-st partial sums; and $c_{M+1}=(L-\sum_{\le M})
	-(L-\sum_{\le M+1})$ carries the sign $(-1)^{M+1}$.
\end{proof}

\begin{theorem}[Enveloping at an integer mean: Poisson]\label{thm:poi-envelope}
	Let $\lambda\in\Z$, $\lambda\ge1$.  Then, exactly,
	\begin{equation}\label{eq:poi-binet}
		\log\frac{\E|X-\lambda|}{\sqrt{2\lambda/\pi}}
		=-J(\lambda)
		=\int_0^\infty\varphi(t)\,\bigl(-e^{-\lambda t}\bigr)\,dt ,
	\end{equation}
	and the series of Theorem~\ref{thm:poi-expansion} collapses to the odd series
	\[
		\sum_{j\ge1}a_{2j-1}(0)\lambda^{1-2j},
		\qquad
		a_{2j-1}(0)=-\frac{B_{2j}}{(2j-1)2j},
	\]
	whose successive partial sums strictly bracket the left-hand side of \eqref{eq:poi-binet}: for
	every $M\ge0$,
	\[
		(-1)^{M+1}\Bigl[\log\frac{\E|X-\lambda|}{\sqrt{2\lambda/\pi}}
		-\sum_{j=1}^{M}\frac{a_{2j-1}(0)}{\lambda^{2j-1}}\Bigr]>0 .
	\]
\end{theorem}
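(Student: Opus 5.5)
The plan is to start from the exact closed form \eqref{eq:poi-closed} with $\lambda\in\Z$, so that $m=\lambda-1$ and $\theta=0$:
\[
	\E|X-\lambda|=2e^{-\lambda}\frac{\lambda^{\lambda}}{(\lambda-1)!}=2e^{-\lambda}\frac{\lambda^{\lambda+1}}{\Gamma(\lambda+1)}=2e^{-\lambda}\frac{\lambda^{\lambda}}{\Gamma(\lambda)} .
\]
Now insert Binet's first formula \eqref{eq:binet} at $x=\lambda$, namely $\log\Gamma(\lambda)=(\lambda-\tfrac12)\log\lambda-\lambda+\tfrac12\log2\pi+J(\lambda)$. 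This gives
\[
	\log\E|X-\lambda|=\log2-\lambda+\lambda\log\lambda-(\lambda-\tfrac12)\log\lambda+\lambda-\tfrac12\log2\pi-J(\lambda)
	=\tfrac12\log\lambda+\log2-\tfrac12\log 2\pi-J(\lambda).
\]
The elementary part is exactly $\log\sqrt{2\lambda/\pi}$. This yields \eqref{eq:poi-binet} with no asymptotics involved, because Binet's formula is an identity for every $x>0$.

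Next, the collapse of the series. At $\theta=0$ we have $a_n(0)=-B_{n+1}/(n(n+1))$. Since $B_{n+1}=0$ for even $n\ge2$, only the odd indices $n=2j-1$ survive, with $a_{2j-1}(0)=-B_{2j}/((2j-1)2j)$. (The case $n=1$ gives $B_2$, which is nonzero, so it lies in the odd family as it should.)

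For the bracketing, apply Proposition~\ref{prop:skeleton} with $\Delta(t)=-e^{-\lambda t}$. This function is continuous and strictly negative, and it satisfies $|\Delta|\le e^{-\lambda t}$ with $c=\lambda\ge1$, so $L=-J(\lambda)$ is exactly the left side of \eqref{eq:poi-binet}. The only computation is to match the skeleton coefficients with the series:
\[
	c_n=\frac{B_{2n}}{(2n)!}\int_0^\infty t^{2n-2}\bigl(-e^{-\lambda t}\bigr)\,dt=-\frac{B_{2n}}{(2n)!}\cdot\frac{(2n-2)!}{\lambda^{2n-1}}=-\frac{B_{2n}}{(2n-1)(2n)}\,\lambda^{1-2n}=\frac{a_{2n-1}(0)}{\lambda^{2n-1}} .
\]
Inequality \eqref{eq:skeleton} is then literally the claimed bracketing for every $M\ge0$.

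There is no real obstacle. The only point needing care is the first step: at integer $\lambda$ the prefactor $\lambda^{m+1}/m!$ must become $\lambda^\lambda/\Gamma(\lambda)$. Binet's formula is applied at $x=\lambda$ itself rather than at $\lambda+1$, so that no extra logarithmic factor like $\log(1+1/\lambda)$ appears and spoils the sign-definiteness. This is automatic here, since $m+1=\lambda$ gives $\Gamma(m+1)=\Gamma(\lambda)$.
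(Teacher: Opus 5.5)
Your proof is correct and is essentially the paper's own argument. It uses the closed form rewritten as $2e^{-\lambda}\lambda^{\lambda}/\Gamma(\lambda)$, Binet's formula \eqref{eq:binet} at $x=\lambda$ (the paper reaches the same point via $\log\Gamma(\lambda+1)=\log\lambda+\log\Gamma(\lambda)$), the vanishing of $B_{2j+1}$ for $j\ge1$, and Proposition~\ref{prop:skeleton} with $\Delta(t)=-e^{-\lambda t}$ and the same moment computation. One harmless slip: under the paper's convention $m=\lfloor\lambda\rfloor$ one has $m=\lambda$ (not $\lambda-1$) at integer $\lambda$, but since $P(\lambda)=P(\lambda-1)$ your closed form has the same value.
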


\begin{proof}
	At $\theta=0$ the even-index coefficients of \eqref{eq:poi-mad} vanish, since
	$a_{2j}(0)=-B_{2j+1}/\bigl(2j(2j+1)\bigr)=0$, and $a_{2j-1}(0)=-B_{2j}/((2j-1)2j)$.

	For \eqref{eq:poi-binet}: by \eqref{eq:poi-closed} with $m=\lambda$,
	$\E|X-\lambda|=2\lambda e^{-\lambda}\lambda^{\lambda}/\Gamma(\lambda+1)$, and by
	\eqref{eq:binet}
	\[
		\log\Gamma(\lambda+1)=\log\lambda+\log\Gamma(\lambda)
		=\Bigl(\lambda+\tfrac12\Bigr)\log\lambda-\lambda+\tfrac12\log2\pi+J(\lambda),
	\]
	so
	\[
	\begin{aligned}
		\log\E|X-\lambda|
		&=\log2+(\lambda+1)\log\lambda-\lambda-\log\Gamma(\lambda+1)\\
		&=\log2+\tfrac12\log\lambda-\tfrac12\log2\pi-J(\lambda),
	\end{aligned}
	\]
	which is \eqref{eq:poi-binet}.  Now apply Proposition~\ref{prop:skeleton} with
	$\Delta(t)=-e^{-\lambda t}$, which is continuous, strictly negative and exponentially
	bounded.  Its power moments are $\int_0^\infty t^{2n-2}e^{-\lambda t}dt
	=(2n-2)!\,\lambda^{1-2n}$, so
	\[
	\begin{aligned}
		c_n
		&=-\frac{B_{2n}}{(2n)!}\,(2n-2)!\,\lambda^{1-2n}\\
		&=-\frac{B_{2n}}{(2n-1)2n}\,\frac1{\lambda^{2n-1}}
		=\frac{a_{2n-1}(0)}{\lambda^{2n-1}},
	\end{aligned}
	\]
	and \eqref{eq:skeleton} is the assertion.
\end{proof}

\section{The negative binomial law}\label{sec:nb}

	Let $X\sim\NB(r,p)$ in the failures parametrisation, with $r>0$ real, $0<p<1$ fixed,
	$q=1-p$,
	\[
		P(k)=\frac{\Gamma(k+r)}{\Gamma(r)\,\Gamma(k+1)}\,p^{r}q^{k},
		\qquad
		\mu=\frac{rq}{p},\qquad \sigma^{2}=\frac{rq}{p^{2}},
	\]
	and let $r\to\infty$; put
	\[
		\nu=\lceil\mu\rceil,\qquad h=h_r=\nu-\mu\in[0,1).
	\]
	By Theorem~\ref{thm:collapse}, $\E|X-\mu|=2(\nu/p)P(\nu)$.

\subsection{The complete expansion}

\begin{theorem}\label{thm:nb-expansion}
	Fix a compact $\mathcal P\subset(0,1)$.  As $r\to\infty$ through the reals, uniformly for
	$p\in\mathcal P$,
	\begin{equation}\label{eq:nb-mad}
	\begin{aligned}
		\E|X-\mu|
		&\sim\frac{1}{p}\sqrt{\frac{2rq}{\pi}}\;
			\exp\Bigl(\sum_{n\ge1}\frac{\tilde a_n(h;p)}{r^{n}}\Bigr),\\
		\tilde a_n
		&=\frac{(-1)^{n+1}}{n(n+1)}
			\Bigl[\bigl(p^{n}-(p/q)^{n}\bigr)B_{n+1}(h)-B_{n+1}\Bigr],
	\end{aligned}
	\end{equation}
	in the truncated-and-uniform sense of \S\ref{sec:prelim}, the estimate being uniform in the
	displacement $h\in[0,1)$.  Note $\tfrac1p\sqrt{2rq/\pi}=\sqrt{2\sigma^{2}/\pi}$.
	Explicitly,
	\begin{equation}\label{eq:nb-first}
	\begin{aligned}
		\tilde a_1&=-\frac1{12}-\frac{p^{2}}{2q}\,B_2(h),\\
		\tilde a_2&=\frac{p^{3}(1+q)}{6q^{2}}\,B_3(h),\\
		\tilde a_3&=\frac1{360}-\frac{p^{4}(1+q+q^{2})}{12q^{3}}\,B_4(h).
	\end{aligned}
	\end{equation}
\end{theorem}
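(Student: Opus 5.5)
The plan is to absorb the prefactor $\nu$ of the collapse into the gamma function, so that $\E|X-\mu|$ becomes a single gamma quotient with zero total scaling, and then apply Lemma~\ref{lem:unequal} once with $x=r$. Done this way, no elementary log-factors appear, so Lemma~\ref{lem:logfactor} is not needed; the size-bias factor removes the elementary tail exactly.

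\emph{Step 1 (absorbing the prefactor).} By Theorem~\ref{thm:collapse}, $\E|X-\mu|=2(\nu/p)P(\nu)$. Using $\nu/\Gamma(\nu+1)=1/\Gamma(\nu)$, this becomes
\[
	\E|X-\mu|=\frac2p\,\frac{\Gamma(\nu+r)}{\Gamma(r)\,\Gamma(\nu)}\,p^{r}q^{\nu}.
\]
This is the negative binomial instance of the universal form (Theorem~\ref{thm:universal}). Substitute $\nu=(q/p)r+h$, so that $\nu+r=r/p+h$. The quotient then has the shape of Lemma~\ref{lem:unequal} with $x=r$ and the following data:
\begin{itemize}
	\item one numerator factor, with $\lambda_1=1/p$ and $u_1=h$;
	\item two denominator factors, with $(\mu_1,v_1)=(1,0)$ and $(\mu_2,v_2)=(q/p,h)$.
\end{itemize}
The balance condition holds, since $\Lambda=1/p-1-q/p=0$.

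The shift $h=h_r$ depends on $r$, but it lies in the compact set $[0,1]$. The scalings $1/p$ and $q/p$ stay in a compact subset of $(0,\infty)$ for $p\in\mathcal P$. The uniformity clause of Lemma~\ref{lem:unequal} therefore gives the remainder $O(r^{-M-1})$ uniformly in both $h$ and $p$.

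\emph{Step 2 (the elementary part).} From Lemma~\ref{lem:unequal}, $\Theta=-\tfrac1p\log p-\tfrac qp\log\tfrac qp$, so
\[
	\Theta r=-r\log p-\tfrac{rq}{p}\log q .
\]
This cancels against $\log(p^{r}q^{\nu})=r\log p+(rq/p+h)\log q$, leaving $h\log q$.

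Next, $U=(h-\tfrac12)+\tfrac12-(h-\tfrac12)=\tfrac12$, which produces the factor $r^{1/2}$.

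Finally, $K=-(h-\tfrac12)\log q-\tfrac12\log2\pi$. Adding the leftover $h\log q$, the $h$-dependence cancels and we are left with $\tfrac12\log q-\tfrac12\log 2\pi$. Collecting everything with the prefactor $2/p$ gives
\[
	\frac2p\sqrt{\frac{rq}{2\pi}}=\frac1p\sqrt{\frac{2rq}{\pi}}=\sqrt{\frac{2\sigma^2}{\pi}} .
\]

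\emph{Step 3 (the series).} By \eqref{eq:Sn},
\[
	S_{n+1}=p^{n}B_{n+1}(h)-B_{n+1}(0)-(p/q)^{n}B_{n+1}(h).
\]
For $n\ge1$ we have $B_{n+1}(0)=B_{n+1}$. Inserting $S_{n+1}$ into \eqref{eq:logF} gives exactly $\tilde a_n$ in \eqref{eq:nb-mad}. Truncating and exponentiating via \eqref{eq:bell} preserves the relative error, as in the Poisson proof.

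The explicit coefficients \eqref{eq:nb-first} then follow from the identity
\[
	p^{n}-(p/q)^{n}=-\frac{p^{n}(1-q^{n})}{q^{n}}
	=-\frac{p^{n+1}(1+q+\dots+q^{n-1})}{q^{n}},
\]
together with $B_2=\tfrac16$, $B_3=0$ and $B_4=-\tfrac1{30}$. For example, at $n=1$ the identity gives $p-p/q=-p^2/q$, hence $\tilde a_1=-\tfrac1{12}-\tfrac{p^2}{2q}B_2(h)$.

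\emph{Main obstacle.} The only delicate point is the exact bookkeeping in Step 2. The $h\log q$ coming from $q^{\nu}$ must cancel the $h$-dependent part of $K$, so that the leading factor is free of the oscillating displacement. I would verify this cancellation carefully, and also check that the reduction $B_{n+1}(0)=B_{n+1}$ is valid for all $n\ge1$, since the exceptional value $B_1=-\tfrac12$ never enters.
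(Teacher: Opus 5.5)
Your proof is correct, and it takes a different route from the paper's.

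The paper applies Lemma~\ref{lem:unequal} to the mass $P(\nu)$ alone, with denominator shift $v_2=h+1$. The difference identity $B_{n+1}(h+1)=B_{n+1}(h)+(n+1)h^{n}$ then splits each coefficient into a pure Appell part plus an elementary tail $\frac{(-1)^{n}}{n}(p/q)^{n}h^{n}$. Lemma~\ref{lem:logfactor} resums that tail to $-\log\bigl(p\nu/(qr)\bigr)$, and after a second truncate-and-replace step it is cancelled against the collapse prefactor $2\nu/p$.

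You instead perform the cancellation exactly at the level of the gamma function, via $\nu/\Gamma(\nu+1)=1/\Gamma(\nu)$. This is precisely the universal form $2\sigma^{2}P^{\downarrow}\{\nu-1\}$ of Theorem~\ref{thm:universal}. Lemma~\ref{lem:unequal} is then applied once, with $v_2=h$, and the pure coefficients $\tilde a_n$ come out directly. Your values $\Theta r+\log(p^{r}q^{\nu})=h\log q$, $U=\tfrac12$, $K=-(h-\tfrac12)\log q-\tfrac12\log2\pi$, and your $S_{n+1}$ all check out, as do the reductions to \eqref{eq:nb-first}.

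Your route is shorter. It needs neither Lemma~\ref{lem:logfactor} nor the extra error bookkeeping, and uniformity in $h$ and $p$ follows from the single uniformity clause of Lemma~\ref{lem:unequal}.

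The paper's longer route buys two things. It yields the expansion \eqref{eq:nb-mass} of the mass itself. It also makes the elementary tail visible, together with its identification with the size-bias prefactor, which is the structural point carried across the whole family (Remark~\ref{rem:scheme}(ii)). The two arguments are the same mechanism: the Appell difference identity is just $\Gamma(z+1)=z\,\Gamma(z)$ seen through the Stirling series.
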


\begin{proof}
	\emph{Step 1: the mass at $\nu$ as a balanced gamma quotient.}  Since
	$\nu+r=\tfrac rp+h$ and $\nu+1=\tfrac{rq}p+h+1$,
	\[
		P(\nu)=\frac{\Gamma\bigl(\tfrac1p\,r+h\bigr)}
		{\Gamma(1\cdot r+0)\;\Gamma\bigl(\tfrac qp\,r+h+1\bigr)}\;p^{r}q^{\nu},
	\]
	a quotient in the format of Lemma~\ref{lem:unequal} with $x=r$ and
	\[
		\lambda_1=\tfrac1p,\ u_1=h;\qquad
		\mu_1=1,\ v_1=0;\qquad
		\mu_2=\tfrac qp,\ v_2=h+1 .
	\]
	The scalings lie in compacts of $(0,\infty)$ for $p\in\mathcal P$, the shifts in $[0,2]$,
	and
	\[
		\Lambda=\frac1p-1-\frac qp=\frac{1-p-q}{p}=0 ,
	\]
	so the lemma applies.

	\emph{Step 2: the elementary constants assemble exactly.}  By the lemma,
	\[
		\Theta=\frac1p\log\frac1p-\frac qp\log\frac qp ,
	\]
	and, with $\nu=\tfrac{rq}p+h$,
	\[
	\begin{aligned}
		\Theta r+r\log p+\nu\log q
		&=r\Bigl[-\frac{\log p}p-\frac qp\log q+\frac qp\log p+\log p+\frac qp\log q\Bigr]
			+h\log q\\
		&=h\log q,
	\end{aligned}
	\]
	since $-\tfrac1p+1+\tfrac qp=0$: the exponential rate cancels identically, as it must ---
	the mass at the mean is only polynomially small.  Next
	\[
		U=\Bigl(h-\tfrac12\Bigr)-\Bigl(0-\tfrac12\Bigr)-\Bigl(h+\tfrac12\Bigr)=-\tfrac12 ,
	\]
	\[
		K=\Bigl(h-\tfrac12\Bigr)\log\frac1p-\Bigl(h+\tfrac12\Bigr)\log\frac qp
		+\frac{1-2}2\log2\pi
		=\log p-\Bigl(h+\tfrac12\Bigr)\log q-\tfrac12\log2\pi ,
	\]
	so that
	\[
		e^{K}\,q^{h}\,r^{-1/2}
		=p\,q^{-h-\frac12}\,q^{h}\,(2\pi r)^{-1/2}
		=\frac{p}{\sqrt{2\pi rq}}
		=\frac1{\sqrt{2\pi\sigma^{2}}} :
	\]
	the Gaussian prefactor, automatically.  Hence
	\begin{equation}\label{eq:nb-mass}
		P(\nu)\sim\frac{1}{\sqrt{2\pi\sigma^{2}}}\,
		\exp\Bigl(\sum_{n\ge1}\frac{A_n(h;p)}{r^{n}}\Bigr),
		\qquad
		A_n=\frac{(-1)^{n+1}}{n(n+1)}\,S_{n+1},
	\end{equation}
	uniformly, with, by \eqref{eq:Sn},
	\[
		S_{n+1}=p^{n}B_{n+1}(h)-B_{n+1}(0)-\Bigl(\frac pq\Bigr)^{n}B_{n+1}(h+1) .
	\]

	\emph{Step 3: splitting off the elementary tail.}  By the difference identity
	\eqref{eq:appell}, $B_{n+1}(h+1)=B_{n+1}(h)+(n+1)h^{n}$, so
	\[
		S_{n+1}=\bigl(p^{n}-(p/q)^{n}\bigr)B_{n+1}(h)-B_{n+1}-(n+1)\Bigl(\frac pq\Bigr)^{n}h^{n},
	\]
	and therefore
	\begin{equation}\label{eq:nb-split}
		A_n=\tilde a_n+\frac{(-1)^{n}}{n}\Bigl(\frac{p}{q}\Bigr)^{n}h^{n}
	\end{equation}
	with $\tilde a_n$ as in \eqref{eq:nb-mad}: a pure Appell part plus an elementary tail.

	\emph{Step 4: the tail is the collapse prefactor.}  By Lemma~\ref{lem:logfactor} with
	$\alpha=q/p$ (compact for $p\in\mathcal P$), for every $M$
	\[
		\sum_{n=1}^{M}\frac{(-1)^{n}}{n}\Bigl(\frac{ph}{qr}\Bigr)^{n}
		=-\log\Bigl(1+\frac{ph}{qr}\Bigr)+O(r^{-M-1})
		=-\log\frac{p\nu}{qr}+O(r^{-M-1}),
	\]
	the last equality because $p\nu=rq+ph$.  On the other hand the collapse prefactor is
	\[
		\frac{2\nu}{p}=\frac{2rq}{p^{2}}\cdot\frac{p\nu}{qr}
		=2\sigma^{2}\cdot\frac{p\nu}{qr} .
	\]
	Truncate the exponent of \eqref{eq:nb-mass} after $M$ terms (cost: relative
	$O(r^{-M-1})$, by the lemma), split it by \eqref{eq:nb-split}, and replace the truncated
	tail by the exact factor $qr/(p\nu)$ (cost: another $O(r^{-M-1})$; the factor is bounded
	above and below by Lemma~\ref{lem:logfactor}).  Multiplying by the prefactor $2\nu/p$, the
	two factors cancel exactly, leaving
	\[
		\E|X-\mu|=\frac{2\nu}{p}\,P(\nu)
		=2\sigma^{2}\cdot\frac1{\sqrt{2\pi\sigma^{2}}}\,
		\exp\Bigl(\sum_{n=1}^{M}\frac{\tilde a_n}{r^{n}}\Bigr)\bigl(1+O(r^{-M-1})\bigr),
	\]
	and $2\sigma^{2}(2\pi\sigma^{2})^{-1/2}=\sqrt{2\sigma^{2}/\pi}=\tfrac1p\sqrt{2rq/\pi}$,
	which is \eqref{eq:nb-mad}.

	For \eqref{eq:nb-first}, reduce the weights by $q-1=-p$:
	$p-(p/q)=p(q-1)/q=-p^{2}/q$;
	$p^{2}-(p/q)^{2}=p^{2}(q^{2}-1)/q^{2}=-p^{3}(1+q)/q^{2}$;
	$p^{3}-(p/q)^{3}=-p^{4}(1+q+q^{2})/q^{3}$; and $B_2=\tfrac16$, $B_4=-\tfrac1{30}$.
\end{proof}

\begin{remark}\label{rem:nb-pattern}
	The constant term $-B_{n+1}$ enters \eqref{eq:nb-mad} with the sign opposite to the binomial
	case \cite[Thm 4.2]{elezovic_mad}, because $\Gamma(r)$ sits in the \emph{denominator} of the
	mass where the binomial's $\Gamma(N+1)$ sat in the numerator.  The weight pattern across the
	family is
	\[
		\underbrace{p^{-n}+(-1)^{n+1}q^{-n}}_{\text{binomial}},\qquad
		\underbrace{1}_{\text{Poisson}},\qquad
		\underbrace{p^{n}-(p/q)^{n}}_{\text{negative binomial}} ,
	\]
	and the weights carry $q^{-n}$, so the expansion degrades as $q\to0$: the effective
	parameter is $\asymp1/(rq)\asymp1/(p\mu)$.  This is the limiting regime in which
	\eqref{eq:nb-mad} must match the Poisson expansion \eqref{eq:poi-mad} as $rq\to\lambda$; a
	uniform matching is left open.
\end{remark}

\subsection{The enveloping theorem}

\begin{theorem}[Enveloping at an integer mean: negative binomial]\label{thm:nb-envelope}
	Let $\mu=rq/p\in\Z$, $\mu\ge1$ (so $h=0$, $\nu=\mu$).  Then, exactly,
	\begin{equation}\label{eq:nb-binet}
		\log\frac{\E|X-\mu|}{\frac1p\sqrt{2rq/\pi}}
		=\int_0^\infty\varphi(t)\,\Delta_r(t)\,dt,
		\qquad
		\Delta_r(t)=e^{-\frac rp t}-e^{-rt}-e^{-\frac{rq}p t} ,
	\end{equation}
	and $\Delta_r(t)<0$ for every $t>0$.  The expansion \eqref{eq:nb-mad} collapses at $h=0$ to
	the odd series with coefficients
	\[
		\tilde a_{2j-1}(0)=\frac{B_{2j}}{(2j-1)2j}
		\Bigl[p^{2j-1}-\Bigl(\frac pq\Bigr)^{2j-1}-1\Bigr],
		\qquad (-1)^{j}\,\tilde a_{2j-1}(0)>0 ,
	\]
	and its successive partial sums strictly bracket the left-hand side of \eqref{eq:nb-binet}.
\end{theorem}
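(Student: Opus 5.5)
Three steps, modelled on the Poisson case, Theorem~\ref{thm:poi-envelope}: an exact Binet representation, the negativity of $\Delta_r$, and the moments of $\Delta_r$ fed into Proposition~\ref{prop:skeleton}.

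\emph{Exact Binet representation.} At $h=0$ we have $\nu=\mu=rq/p$. Theorem~\ref{thm:collapse} gives
\[
	\E|X-\mu|=\frac{2\mu}{p}\,P(\mu),
	\qquad
	P(\mu)=\frac{\Gamma(r/p)}{\Gamma(r)\,\Gamma(\mu+1)}\,p^{r}q^{\mu}.
\]
I write $\Gamma(\mu+1)=\mu\,\Gamma(\mu)$. The factor $\mu$ then cancels the prefactor, leaving
\[
	\E|X-\mu|=\frac{2}{p}\,\frac{\Gamma(r/p)}{\Gamma(r)\,\Gamma(rq/p)}\,p^{r}q^{rq/p}.
\]
This is the Binet-level analogue of the size-bias cancellation in Step~4 of Theorem~\ref{thm:nb-expansion}. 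I then apply Binet's formula \eqref{eq:binet} to each of the three gammas, with arguments $r/p$, $r$ and $rq/p$.

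The elementary parts must collapse. The linear terms cancel because $r/p-r-rq/p=0$. The $x\log x$ terms combine with $r\log p+(rq/p)\log q$ to zero, which is the computation of Step~2. The $-\tfrac12\log x$ terms give $-\tfrac12\log(r/p)+\tfrac12\log r+\tfrac12\log(rq/p)=\tfrac12\log(rq/p)$. The constants $\tfrac12\log 2\pi$ appear once in the numerator and twice in the denominator, giving net $-\tfrac12\log2\pi$. Together with $\log(2/p)$ this yields $\log\bigl(\tfrac1p\sqrt{2rq/(\pi p)}\bigr)$.

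This must be rechecked against the stated normalisation $\tfrac1p\sqrt{2rq/\pi}=\sqrt{2\sigma^2/\pi}$, and I expect bookkeeping here. The $-\tfrac12$ in Binet's $(x-\tfrac12)\log x$ contributes $+\tfrac12\log p$ from $x=r/p$, and another $p$-power comes from $(rq/p)\log(rq/p)$. Carried out carefully, the $p$-powers should assemble to $\sigma^2=rq/p^2$, exactly as $e^{K}q^{h}r^{-1/2}$ did in Step~2. The remaining $J$-terms are
\[
	J(r/p)-J(r)-J(rq/p)=\int_0^\infty\varphi(t)\,\Delta_r(t)\,dt,
\]
which is \eqref{eq:nb-binet}.

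\emph{Negativity of $\Delta_r$.} Set $a=e^{-rt}\in(0,1)$, $\alpha=1/p>1$ and $\beta=q/p=\alpha-1$. Then $\Delta_r=a^{\alpha}-a-a^{\beta}$. Since $a^{\alpha}=a\cdot a^{\beta}<a$ for $a<1$, we get $\Delta_r<-a^{\beta}<0$. The bound $|\Delta_r|\le 3e^{-\min(1,q/p)rt}$ gives the required exponential decay, and continuity is clear.

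\emph{Moments and application.} The moments are $\int_0^\infty t^{2n-2}e^{-ct}\,dt=(2n-2)!\,c^{1-2n}$. Hence
\[
	c_n=\frac{B_{2n}}{(2n-1)2n}\Bigl[p^{2n-1}-1-\Bigl(\frac pq\Bigr)^{2n-1}\Bigr]r^{1-2n}.
\]
This should match $\tilde a_{2n-1}(0)\,r^{1-2n}$ from \eqref{eq:nb-mad} at $h=0$, using $B_{2n}(0)=B_{2n}$. The even-index coefficients vanish because $B_{2j+1}(0)=B_{2j+1}=0$ for $j\ge1$. Proposition~\ref{prop:skeleton} then gives both the strict bracketing and $\operatorname{sign}c_n=(-1)^n$, which is the sign claim $(-1)^j\tilde a_{2j-1}(0)>0$. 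As a direct check, the bracket is negative because $p^{2j-1}<1$, and $\operatorname{sign}B_{2j}=(-1)^{j+1}$.

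The main obstacle is the first step: verifying that the elementary constants assemble exactly to the stated prefactor. The other two steps are immediate.
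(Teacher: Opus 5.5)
Your route is the same as the paper's. The paper also in effect writes $\log\Gamma(\mu+1)=\log\mu+\log\Gamma(\mu)$, applies Binet's formula to the three gammas, and feeds the moments of $\Delta_r$ into Proposition~\ref{prop:skeleton}. Your negativity argument ($e^{-rt/p}<e^{-rt}$, giving $\Delta_r<-e^{-rqt/p}$) is a harmless variant of the paper's ($e^{-rt/p}<e^{-rqt/p}$, giving $\Delta_r<-e^{-rt}$). Your moment computation, the parity collapse and the sign check are all correct.

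The one real defect is in the step you single out as the main obstacle: you never establish the exact normalisation, and the arithmetic you do display is wrong. The half-log sum is
\[
-\tfrac12\log\frac rp+\tfrac12\log r+\tfrac12\log\frac{rq}p=\tfrac12\log(rq),
\]
not $\tfrac12\log(rq/p)$. The $+\tfrac12\log p$ coming from $x=r/p$ is cancelled by the $-\tfrac12\log p$ coming from $x=rq/p$. With this correction, $\log\frac2p+\tfrac12\log(rq)-\tfrac12\log2\pi=\log\bigl(\tfrac1p\sqrt{2rq/\pi}\bigr)$ exactly, and the proof closes.

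Your proposed rescue, a further $p$-power coming from $(rq/p)\log(rq/p)$, does not exist. You have already shown, correctly, that the $x\log x$ terms vanish identically against $r\log p+\mu\log q$: the coefficients of $\log r$, $\log p$ and $\log q$ are each multiples of $-\frac1p+1+\frac qp=0$. As written, your computation therefore yields the prefactor $\frac1p\sqrt{2rq/(\pi p)}$, which contradicts \eqref{eq:nb-binet}. The theorem asserts an exact identity with this specific normalisation, so that constant has to be verified, not just expected to work out.
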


\begin{proof}
	\emph{Parity and sign.}  At $h=0$ the even coefficients of \eqref{eq:nb-mad} vanish
	($B_{2j+1}(0)=B_{2j+1}=0$), and
	$\tilde a_{2j-1}(0)=\bigl[(p^{2j-1}-(p/q)^{2j-1})B_{2j}-B_{2j}\bigr]/((2j-1)2j)$, which is
	the displayed formula.  The bracket is strictly negative: $p^{2j-1}<1$ and
	$(p/q)^{2j-1}>0$, so $p^{2j-1}-(p/q)^{2j-1}-1<0$.  Since
	$\operatorname{sign}B_{2j}=(-1)^{j+1}$, the product has sign $(-1)^{j}$.

	\emph{The exact identity.}  With $\mu\in\Z$, $\nu=\mu$ and $\mu+r=r/p$,
	\[
		\E|X-\mu|=\frac{2\mu}{p}\,
		\frac{\Gamma(r/p)}{\Gamma(r)\,\Gamma(\mu+1)}\,p^{r}q^{\mu}.
	\]
	Apply Binet's formula \eqref{eq:binet} to the three gamma factors, using
	$\log\Gamma(\mu+1)=(\mu+\tfrac12)\log\mu-\mu+\tfrac12\log2\pi+J(\mu)$ as in
	Theorem~\ref{thm:poi-envelope}.  The Binet remainders contribute
	$J(r/p)-J(r)-J(\mu)$, which is the right-hand side of \eqref{eq:nb-binet} with
	$\mu=rq/p$.  The elementary parts assemble to the leading factor: the linear terms give
	\[
		-\frac rp+r+\mu=r\Bigl(-\frac1p+1+\frac qp\Bigr)=0,
	\]
	the $\log$-linear terms give
	\[
		\frac rp\log\frac rp-r\log r-\mu\log\mu+r\log p+\mu\log q=0
	\]
	(collect the coefficients of $\log r$, $\log p$, $\log q$ separately: each is a multiple of
	$-\tfrac1p+1+\tfrac qp=0$), and the residual half-logs and constants give
	\[
	\begin{aligned}
		-\tfrac12\Bigl[\log\frac rp-\log r+\log\mu\Bigr]
		&+\log\frac{2\mu}{p}-\tfrac12\log2\pi\\
		&=\log2+\tfrac12\log(rq)-\log p-\tfrac12\log2\pi\\
		&=\log\Bigl(\frac1p\sqrt{\frac{2rq}{\pi}}\Bigr),
	\end{aligned}
	\]
	using $\log(r/p)-\log r+\log\mu=\log(rq)-2\log p$.  This proves \eqref{eq:nb-binet}.

	\emph{Negativity.}  Since $p<1$, $\tfrac rp>\tfrac{rq}p$, so
	$e^{-rt/p}<e^{-rqt/p}$ for $t>0$ and $\Delta_r(t)<-e^{-rt}<0$.

	\emph{Enveloping.}  Apply Proposition~\ref{prop:skeleton} with $\Delta=\Delta_r$.  The
	moments are
	\[
	\begin{aligned}
		c_n
		&=\frac{B_{2n}}{(2n)!}\,(2n-2)!\,
			\Bigl[\Bigl(\frac rp\Bigr)^{1-2n}-r^{1-2n}
				-\Bigl(\frac{rq}p\Bigr)^{1-2n}\Bigr]\\
		&=\frac{B_{2n}}{(2n-1)2n}\cdot
			\frac{p^{2n-1}-1-(p/q)^{2n-1}}{r^{2n-1}}
		=\frac{\tilde a_{2n-1}(0)}{r^{2n-1}},
	\end{aligned}
	\]
	and \eqref{eq:skeleton} is the assertion.
\end{proof}

\begin{remark}
	The bracket $\Delta_r$ has the same three-exponential shape as the binomial's
	$e^{-Nt}-e^{-Npt}-e^{-Nqt}$ \cite[\S5]{elezovic_mad}, with the balanced partition
	$\tfrac1p=1+\tfrac qp$ in place of $1=p+q$: the largest gamma argument against the two
	smaller ones, negativity being immediate because the largest argument produces the smallest
	exponential.
\end{remark}

\section{The hypergeometric law}\label{sec:hyp}

	Let $X\sim\Hyp(N,K,n)$,
	\[
		P(k)=\frac{\binom Kk\binom{N-K}{n-k}}{\binom Nn},
		\qquad
		\mu=\frac{nK}{N},\qquad
		\sigma^{2}=\frac{nK(N-K)(N-n)}{N^{2}(N-1)} ,
	\]
	and let $N\to\infty$ with the \emph{margins} in fixed proportions: write
	\[
		\kappa:=\frac KN,\qquad \eta:=\frac nN ,
	\]
	and assume $(\kappa,\eta)$ ranges over a compact subset of $(0,1)^{2}$.  Introduce the four
	\emph{cell proportions} of the $2\times2$ table with margins $\kappa,\eta$ at independence,
	\begin{equation}\label{eq:cells}
		a=\kappa\eta,\qquad b=\kappa(1-\eta),\qquad c=\eta(1-\kappa),\qquad d=(1-\kappa)(1-\eta),
	\end{equation}
	so that
	\begin{equation}\label{eq:independence}
		a+b+c+d=1,\qquad ad=bc,\qquad a+b=\kappa,\ a+c=\eta,\ c+d=1-\kappa,\ b+d=1-\eta .
	\end{equation}
	Put
	\[
		\nu=\lceil\mu\rceil,\qquad h=\nu-\mu\in[0,1),\qquad L:=N-K-n .
	\]
	The relevance of \eqref{eq:cells} is that at $k=\mu$ the four entries
	$k,\ K-k,\ n-k,\ L+k$ of the table underlying $P(k)$ are \emph{exactly} $Na,Nb,Nc,Nd$: the
	independence table.  The identity $ad=bc$ is the independence identity, and it is what makes
	every elementary constant below assemble.

	By Theorem~\ref{thm:collapse}, $\E|X-\mu|=2g(\nu)P(\nu)$ with $g(\nu)=\nu(\nu+L)/N$.

\subsection{The complete expansion}

\begin{theorem}\label{thm:hyp-expansion}
	As $N\to\infty$ with $(\kappa,\eta)=(K/N,n/N)$ in a compact subset of $(0,1)^{2}$,
	\begin{equation}\label{eq:hyp-mad}
	\begin{aligned}
		\E|X-\mu|
		&\sim\sqrt{\frac{2Nad}{\pi}}\;
			\exp\Bigl(\sum_{m\ge1}\frac{\tilde a_m(h)}{N^{m}}\Bigr),\\
		\tilde a_m
		&=\frac{(-1)^{m+1}}{m(m+1)}
			\Bigl[(W_m-1)\,B_{m+1}-\widehat\Xi_m\,B_{m+1}(h)\Bigr],
	\end{aligned}
	\end{equation}
	uniformly, in the truncated sense of \S\ref{sec:prelim}, where
	\[
	\begin{aligned}
		W_m&=\kappa^{-m}+(1-\kappa)^{-m}+\eta^{-m}+(1-\eta)^{-m},\\
		\widehat\Xi_m&=\bigl(a^{-m}+d^{-m}\bigr)+(-1)^{m+1}\bigl(b^{-m}+c^{-m}\bigr).
	\end{aligned}
	\]
	Here $Nad=nK(N-K)(N-n)/N^{3}=\sigma^{2}\,(N-1)/N$.  In particular, using $ad=bc$,
	\begin{equation}\label{eq:hyp-first}
		\tilde a_1=\frac1{12}\Bigl[\frac1{\kappa(1-\kappa)}+\frac1{\eta(1-\eta)}-1\Bigr]
		-\frac{B_2(h)}{2\,ad} .
	\end{equation}
\end{theorem}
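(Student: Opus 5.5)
The plan is to follow the negative binomial proof (Theorem~\ref{thm:nb-expansion}) step by step, now with two size-bias cancellations instead of one.

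\emph{Step 1: write $P(\nu)$ as a balanced gamma quotient.} Write
\[
	P(\nu)=\frac{\Gamma(K+1)\Gamma(N-K+1)\Gamma(n+1)\Gamma(N-n+1)}{\Gamma(N+1)\Gamma(\nu+1)\Gamma(K-\nu+1)\Gamma(n-\nu+1)\Gamma(L+\nu+1)} .
\]
By \eqref{eq:cells} the four cell arguments are
\[
	\nu+1=Na+h+1,\qquad K-\nu+1=Nb+1-h,\qquad n-\nu+1=Nc+1-h,\qquad L+\nu+1=Nd+h+1 .
\]
This is the format of Lemma~\ref{lem:unequal} with $x=N$:
\begin{itemize}
	\item numerator scalings $\kappa,1-\kappa,\eta,1-\eta$, each with shift $1$;
	\item denominator scalings $1$ (shift $1$), $a,d$ (shift $h+1$) and $b,c$ (shift $1-h$).
\end{itemize}
The scalings stay in a compact subset of $(0,\infty)$ because $(\kappa,\eta)$ does, and the shifts lie in $[0,2]$. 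The balance condition holds: $\Lambda=2-(1+a+b+c+d)=0$.

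\emph{Step 2: the elementary constants.}
\begin{itemize}
	\item The rate vanishes, $\Theta=0$. Since $\log a=\log\kappa+\log\eta$ and similarly for $b,c,d$, we get $\sum_{\rm cells}a\log a=\kappa\log\kappa+(1-\kappa)\log(1-\kappa)+\eta\log\eta+(1-\eta)\log(1-\eta)$, which cancels the numerator terms; the $\Gamma(N+1)$ term has $\mu_k\log\mu_k=0$.
	\item The power is $U=2-\tfrac52=-\tfrac12$, because the $h$'s cancel among the four cells.
	\item In $K$, the $h$-dependence is $-h\log\bigl(ad/(bc)\bigr)=0$ by the independence identity $ad=bc$. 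Using $abcd=\bigl(\kappa(1-\kappa)\eta(1-\eta)\bigr)^2=(ad)^2$, this leaves $K=-\tfrac12\log(ad)-\tfrac12\log2\pi$.
\end{itemize}
Hence $P(\nu)\sim(2\pi Nad)^{-1/2}\exp\bigl(\sum A_m N^{-m}\bigr)$ with $A_m=\frac{(-1)^{m+1}}{m(m+1)}S_{m+1}$.

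\emph{Step 3: split $S_{m+1}$ into the Appell part and the tail.} Use $B_{m+1}(1)=B_{m+1}$, the reflection $B_{m+1}(1-h)=(-1)^{m+1}B_{m+1}(h)$, and the difference identity $B_{m+1}(h+1)=B_{m+1}(h)+(m+1)h^m$ from \eqref{eq:appell}. These give
\[
	S_{m+1}=(W_m-1)B_{m+1}-\widehat\Xi_mB_{m+1}(h)-(m+1)h^m\bigl(a^{-m}+d^{-m}\bigr),
\]
so that
\[
	A_m=\tilde a_m+\frac{(-1)^m}{m}\Bigl[\Bigl(\frac ha\Bigr)^m+\Bigl(\frac hd\Bigr)^m\Bigr] .
\]

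\emph{Step 4: the two tails are the collapse prefactor.} Apply Lemma~\ref{lem:logfactor} twice, with $\alpha=a$ and $\alpha=d$. The truncated tails equal
\[
	-\log\Bigl(1+\frac h{aN}\Bigr)-\log\Bigl(1+\frac h{dN}\Bigr)=-\log\frac{\nu}{Na}-\log\frac{L+\nu}{Nd}
\]
up to $O(N^{-M-1})$. On the other side,
\[
	2g(\nu)=\frac{2\nu(\nu+L)}{N}=2Nad\cdot\frac{\nu}{Na}\cdot\frac{\nu+L}{Nd} .
\]
The two log-factors therefore cancel exactly, as in the negative binomial proof: this is the doubling announced in Remark~\ref{rem:universal-reading}. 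What remains is $2Nad\,(2\pi Nad)^{-1/2}=\sqrt{2Nad/\pi}$ times the pure Appell exponential, uniformly in $h$ and in the compact set of $(\kappa,\eta)$.

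\emph{Step 5: the first coefficient.} For \eqref{eq:hyp-first}, note that $\widehat\Xi_1=a^{-1}+b^{-1}+c^{-1}+d^{-1}$, which equals $1/(ad)$ after using $a+b+c+d=1$ and $bc=ad$. Also $W_1=\frac1{\kappa(1-\kappa)}+\frac1{\eta(1-\eta)}$. Substituting $B_2=\tfrac16$ gives the displayed formula.

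The only delicate point is the bookkeeping of shifts in Step 3. The $b,c$ cells carry shift $1-h$ and must be reflected, while the $a,d$ cells carry $h+1$ and produce the elementary tails. Beyond that, the argument is routine given the independence identity $ad=bc$.
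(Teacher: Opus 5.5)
Your proposal is correct and follows the paper's proof essentially step for step: the same balanced nine-gamma quotient fed into Lemma~\ref{lem:unequal}, the same assembly of $\Theta$, $U$ and the constant term through $ad=bc$, the same splitting of $S_{m+1}$ (via $B_{m+1}(1)=B_{m+1}$, reflection and the difference identity) into the pure Appell part plus the two diagonal-cell tails, and the same double use of Lemma~\ref{lem:logfactor} to cancel those tails against $2g(\nu)=2Nad\cdot\frac{\nu}{Na}\cdot\frac{\nu+L}{Nd}$. The only item you leave implicit is the one-line check $Nad=nK(N-K)(N-n)/N^{3}=\sigma^{2}(N-1)/N$, which is immediate from the definitions.
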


\begin{proof}
	\emph{Step 1: nine gammas, balanced.}  Writing the three binomial coefficients through
	factorials,
	\[
		P(\nu)=\frac{\Gamma(K+1)\,\Gamma(N-K+1)\,\Gamma(n+1)\,\Gamma(N-n+1)}
		{\Gamma(\nu+1)\,\Gamma(K-\nu+1)\,\Gamma(n-\nu+1)\,\Gamma(L+\nu+1)\,\Gamma(N+1)} .
	\]
	Since $\nu=Na+h$, the nine arguments are, in the format of Lemma~\ref{lem:unequal} with
	$x=N$,
	\[
		\begin{array}{llll}
			\text{numerator:} &
			(\kappa,\,1),\quad (1-\kappa,\,1),\quad (\eta,\,1),\quad (1-\eta,\,1);\\[2pt]
			\text{denominator:} &
			(a,\,h+1),\quad (b,\,1-h),\quad (c,\,1-h),\quad (d,\,h+1),\quad (1,\,1),
		\end{array}
	\]
	each pair being (scaling, shift): for instance
	$K-\nu+1=N\kappa-Na-h+1=Nb+(1-h)$ and $L+\nu+1=Nd+(h+1)$.  The scalings lie in compacts of
	$(0,\infty)$, the shifts in $[0,2]$, and the balance holds:
	$\kappa+(1-\kappa)+\eta+(1-\eta)=2=a+b+c+d+1$.

	\emph{Step 2: the elementary constants assemble through $ad=bc$.}  First $\Theta=0$:
	\[
	\begin{aligned}
		\sum_{\text{cells}}(\text{cell})\log(\text{cell})
		={}&a(\log\kappa+\log\eta)+b(\log\kappa+\log(1-\eta))\\
		&+c(\log\eta+\log(1-\kappa))+d(\log(1-\kappa)+\log(1-\eta)),
	\end{aligned}
	\]
	and collecting the four logarithms by \eqref{eq:independence} this equals
	$\kappa\log\kappa+(1-\kappa)\log(1-\kappa)+\eta\log\eta+(1-\eta)\log(1-\eta)
	=\sum_{\text{margins}}(\cdot)\log(\cdot)$; the total's contribution $1\cdot\log1$ vanishes.
	So the exponential rate is identically zero: the entropy of the independence table equals
	the entropy of its margins.  Next,
	\[
		U=4\cdot\tfrac12-\Bigl[\Bigl(h+\tfrac12\Bigr)+\Bigl(\tfrac12-h\Bigr)
		+\Bigl(\tfrac12-h\Bigr)+\Bigl(h+\tfrac12\Bigr)+\tfrac12\Bigr]=2-\tfrac52=-\tfrac12 ,
	\]
	and, using first $\kappa(1-\kappa)\eta(1-\eta)=(\kappa\eta)\bigl((1-\kappa)(1-\eta)\bigr)=ad$
	and then $ad=bc$,
	\[
	\begin{aligned}
		K_{\mathrm{el}}
		&=\tfrac12\log(ad)
			-\Bigl(h+\tfrac12\Bigr)\log(ad)-\Bigl(\tfrac12-h\Bigr)\log(bc)
			+\frac{4-5}2\log2\pi\\
		&=-\tfrac12\log(ad)-\tfrac12\log2\pi .
	\end{aligned}
	\]
	Hence $e^{K_{\mathrm{el}}}\,N^{-1/2}=(2\pi Nad)^{-1/2}$, and
	\begin{equation}\label{eq:hyp-mass}
		P(\nu)\sim\frac{1}{\sqrt{2\pi Nad}}\,
		\exp\Bigl(\sum_{m\ge1}\frac{A_m(h)}{N^{m}}\Bigr),
		\qquad A_m=\frac{(-1)^{m+1}}{m(m+1)}S_{m+1},
	\end{equation}
	uniformly, with
	\[
	\begin{aligned}
		S_{m+1}
		&=W_m\,B_{m+1}\\
		&\quad-(a^{-m}+d^{-m})B_{m+1}(h+1)\\
		&\quad-(b^{-m}+c^{-m})B_{m+1}(1-h)-B_{m+1}(1).
	\end{aligned}
	\]

	\emph{Step 3: the elementary tail has two factors.}  By the two identities
	\eqref{eq:appell},
	\[
	\begin{aligned}
		S_{m+1}
		&=(W_m-1)B_{m+1}-\widehat\Xi_m\,B_{m+1}(h)\\
		&\quad -(m+1)\,h^{m}\bigl(a^{-m}+d^{-m}\bigr),
	\end{aligned}
	\]
	so that
	\begin{equation}\label{eq:hyp-split}
		A_m=\tilde a_m+\frac{(-1)^{m}}{m}\,h^{m}\bigl(a^{-m}+d^{-m}\bigr):
	\end{equation}
	a pure Appell part plus \emph{two} elementary tails, one for each diagonal cell --- the two
	entries of the table that grow with $k$.

	\emph{Step 4: the quadratic prefactor cancels both.}  By Lemma~\ref{lem:logfactor}, applied
	twice (with $\alpha=a$ and $\alpha=d$),
	\[
	\begin{aligned}
		\sum_{m=1}^{M_0}\frac{(-1)^{m}h^{m}}{m}
			\Bigl(\frac1{(Na)^{m}}+\frac1{(Nd)^{m}}\Bigr)
		&=-\log\Bigl(1+\frac h{Na}\Bigr)-\log\Bigl(1+\frac h{Nd}\Bigr)\\
		&\quad+O(N^{-M_0-1})\\
		&=-\log\frac{\nu\,(\nu+L)}{N^{2}ad}+O(N^{-M_0-1}),
	\end{aligned}
	\]
	since $Na+h=\nu$ and $Nd+h=\nu+L$.  The collapse prefactor is exactly
	\[
		2g(\nu)=\frac{2\nu(\nu+L)}{N}
		=2Nad\cdot\frac{\nu(\nu+L)}{N^{2}ad}.
	\]
	the quadratic $g$ is the product of the two diagonal factors.  Truncating the exponent of
	\eqref{eq:hyp-mass}, splitting by \eqref{eq:hyp-split}, replacing the truncated tails by the
	exact factor $N^{2}ad/\bigl(\nu(\nu+L)\bigr)$, and multiplying by $2g(\nu)$, the factors
	cancel exactly, leaving
	\[
		\E|X-\mu|=2Nad\cdot\frac1{\sqrt{2\pi Nad}}\,
		\exp\Bigl(\sum_{m=1}^{M_0}\frac{\tilde a_m}{N^{m}}\Bigr)\bigl(1+O(N^{-M_0-1})\bigr),
	\]
	and $2Nad(2\pi Nad)^{-1/2}=\sqrt{2Nad/\pi}$, which is \eqref{eq:hyp-mad}.

	For \eqref{eq:hyp-first}: $W_1=\kappa^{-1}+(1-\kappa)^{-1}+\eta^{-1}+(1-\eta)^{-1}
	=\frac1{\kappa(1-\kappa)}+\frac1{\eta(1-\eta)}$, and, by $ad=bc$,
	\[
		\widehat\Xi_1=\frac1a+\frac1b+\frac1c+\frac1d
		=\frac{a+d}{ad}+\frac{b+c}{bc}=\frac{a+b+c+d}{ad}=\frac1{ad},
	\]
	so $\tilde a_1=\tfrac12\bigl[(W_1-1)B_2-\widehat\Xi_1B_2(h)\bigr]
	=\tfrac1{12}(W_1-1)-B_2(h)/(2ad)$.
\end{proof}

\begin{remark}\label{rem:hyp-pattern}
	The coefficients speak the binomial grammar over a $2\times2$ table: the diagonal cells,
	which grow with $k$, enter $\widehat\Xi_m$ with $+$; the off-diagonal cells, which shrink,
	enter with the parity sign $(-1)^{m+1}$ --- exactly as $p^{-m}$ and $q^{-m}$ do in the
	binomial weight $p^{-m}+(-1)^{m+1}q^{-m}$ \cite{elezovic_mad}.  The margins and the total
	play the role that the numerator $\Gamma(N+1)$ played there, supplying the constant
	$(W_m-1)B_{m+1}$.  In $\tilde a_1$ the binomial's $\tfrac1{12}-B_2(h)/(2pq)$ reappears with
	$pq\mapsto ad$ and the margins entering the constant term.

	The natural scale of the expansion is $Nad$, not $\sigma^{2}$: the finite-population factor
	$N/(N-1)$ stays inside $\sigma^{2}=Nad\cdot N/(N-1)$ and never enters the coefficients.
\end{remark}

\subsection{The enveloping theorem}

\begin{theorem}[Enveloping at an integer mean: hypergeometric]\label{thm:hyp-envelope}
	Let $\mu=nK/N\in\Z$, $1\le\mu$, with $(\kappa,\eta)\in(0,1)^{2}$ (so $h=0$, $\nu=\mu=Na$).
	Then, exactly,
	\begin{equation}\label{eq:hyp-binet}
		\log\frac{\E|X-\mu|}{\sqrt{2Nad/\pi}}
		=\int_0^\infty\varphi(t)\,\Delta_N(t)\,dt,
	\end{equation}
	with
	\[
	\begin{aligned}
		\Delta_N(t)=
		& e^{-Kt}+e^{-(N-K)t}+e^{-nt}+e^{-(N-n)t}\\
		&-\Bigl[e^{-Nat}+e^{-Nbt}+e^{-Nct}+e^{-Ndt}+e^{-Nt}\Bigr],
	\end{aligned}
	\]
	and $\Delta_N(t)<0$ for every $t>0$.  The expansion \eqref{eq:hyp-mad} collapses at $h=0$
	to the odd series with coefficients
	\[
	\begin{aligned}
		\tilde a_{2j-1}(0)
		&=\frac{B_{2j}}{(2j-1)2j}
			\Bigl[W_{2j-1}-1-\bigl(a^{1-2j}+b^{1-2j}+c^{1-2j}+d^{1-2j}\bigr)\Bigr],\\
		(-1)^{j}\,\tilde a_{2j-1}(0)&>0,
	\end{aligned}
	\]
	and its successive partial sums strictly bracket the left-hand side of
	\eqref{eq:hyp-binet}.
\end{theorem}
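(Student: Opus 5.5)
The plan mirrors the proof of Theorem~\ref{thm:nb-envelope} and has four steps: parity and sign of the odd coefficients, the exact Binet identity \eqref{eq:hyp-binet}, negativity of $\Delta_N$, and finally Proposition~\ref{prop:skeleton}.

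\emph{Parity and sign.} At $h=0$ every $B_{2j+1}(0)$ vanishes, so the even coefficients $\tilde a_{2j}$ of \eqref{eq:hyp-mad} are zero. For odd index $m=2j-1$ the parity sign is $(-1)^{m+1}=+1$, so $\widehat\Xi_{2j-1}=a^{1-2j}+b^{1-2j}+c^{1-2j}+d^{1-2j}$, and the displayed formula for $\tilde a_{2j-1}(0)$ follows directly. I will not sign the bracket by hand. Instead I will read off $(-1)^j\tilde a_{2j-1}(0)>0$ from the final clause of Proposition~\ref{prop:skeleton} once the $c_n$ are identified. For that, $\Delta_N<0$ is all that is needed.

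\emph{Exact identity.} With $\nu=\mu=Na$, the nine gamma arguments of Step~1 of Theorem~\ref{thm:hyp-expansion} become $K+1$, $N-K+1$, $n+1$, $N-n+1$ in the numerator and $Na+1$, $Nb+1$, $Nc+1$, $Nd+1$, $N+1$ in the denominator. I will write each factor as $\log\Gamma(x+1)=(x+\tfrac12)\log x-x+\tfrac12\log2\pi+J(x)$, as in the proof of Theorem~\ref{thm:poi-envelope}. The Binet remainders then contribute $J(K)+J(N-K)+J(n)+J(N-n)-J(Na)-J(Nb)-J(Nc)-J(Nd)-J(N)$, which is $\int\varphi\,\Delta_N$.

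The elementary parts reassemble exactly as in Step~2 of that proof. The linear terms cancel by the balance $2=a+b+c+d+1$. The $x\log x$ terms cancel by the entropy identity there, together with the fact that the $\log N$ coefficients balance. The half-logs and $2\pi$'s give $(2\pi Nad)^{-1/2}$ via $ad=bc$. Multiplying by the prefactor $2g(\mu)=2\,Na\cdot Nd/N=2Nad$ yields $\sqrt{2Nad/\pi}$.

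\emph{Negativity (the main step).} Put $x=e^{-Nt}\in(0,1)$, $y=x^{\kappa}$ and $z=x^{1-\kappa}$, so that $yz=x$. Then
\[
e^{-Nat}=y^{\eta},\qquad e^{-Nbt}=y^{1-\eta},\qquad e^{-Nct}=z^{\eta},\qquad e^{-Ndt}=z^{1-\eta}.
\]
Set $G(w):=w^{\eta}+w^{1-\eta}-w$. Grouping the nine exponentials, I expect the identity
\[
\Delta_N=G(x)-G(y)-G(z)
\]
to hold by direct bookkeeping. Indeed, $-G(y)-G(z)$ supplies $y+z=e^{-Kt}+e^{-(N-K)t}$ and minus the four cell terms. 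Likewise $G(x)$ supplies $e^{-nt}+e^{-(N-n)t}-e^{-Nt}$.

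It then suffices to show $G(yz)<G(y)+G(z)$ for $y,z\in(0,1)$. Since $w^{1-\eta}\ge w$, we have $G(y)\ge y^{\eta}$, and since $w^{\eta}\ge w$, we have $G(z)\ge z^{1-\eta}$. Also $G(yz)<(yz)^{\eta}+(yz)^{1-\eta}\le y^{\eta}+z^{1-\eta}$, because $yz>0$ and $y,z<1$. Combining these gives $\Delta_N(t)<0$ for all $t>0$.

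Continuity is clear, and $|\Delta_N|\le 9e^{-ct}$ with $c=N\min(a,b,c,d)$.

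\emph{Enveloping.} I will apply Proposition~\ref{prop:skeleton} with $\Delta=\Delta_N$. The moments $\int_0^\infty t^{2n-2}e^{-Nst}\,dt=(2n-2)!\,(Ns)^{1-2n}$ give
\[
c_n=\frac{B_{2n}}{(2n-1)2n}\Bigl[W_{2n-1}-1-\textstyle\sum_{\text{cells}}(\cdot)^{1-2n}\Bigr]N^{1-2n}=\frac{\tilde a_{2n-1}(0)}{N^{2n-1}}.
\]
The bracketing of the partial sums and the sign $(-1)^j\tilde a_{2j-1}(0)>0$ then both follow from Proposition~\ref{prop:skeleton}. The only delicate point in the whole argument is finding the grouping $G(x)-G(y)-G(z)$; once that identity is checked, everything else is routine.
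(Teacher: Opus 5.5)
Your proof is correct and follows the paper's own four steps: parity, the Binet identity, negativity of $\Delta_N$, and Proposition~\ref{prop:skeleton}. There are two small differences. You read the sign $(-1)^j\tilde a_{2j-1}(0)>0$ off the last clause of Proposition~\ref{prop:skeleton} rather than bounding the bracket directly. And your grouping $\Delta_N=G(x)-G(y)-G(z)$, once unpacked, is just a termwise cell-versus-margin comparison: $e^{-Nbt}\ge e^{-Kt}$, $e^{-Nct}\ge e^{-(N-K)t}$, $e^{-Nat}\ge e^{-nt}$, $e^{-Ndt}\ge e^{-(N-n)t}$, plus the leftover $-e^{-Nt}$. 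That is the paper's pairing argument with the other matching of cells to margins. So the negativity step is not the delicate point you describe: each cell proportion is smaller than either margin containing it, and no auxiliary function is needed.
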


\begin{proof}
	\emph{Parity and sign.}  At $h=0$ the even coefficients vanish, since for even $m$ both
	$B_{m+1}$ and $B_{m+1}(0)$ are odd-index Bernoulli numbers; for odd $m=2j-1$ the weight
	$\widehat\Xi_{2j-1}$ has all four signs $+$, giving the displayed formula.  Each cell is
	majorised by a margin containing it:
	\[
		a<\kappa,\qquad b<1-\eta,\qquad c<\eta,\qquad d<1-\kappa,
	\]
	so $a^{1-2j}>\kappa^{1-2j}$ etc., whence
	$W_{2j-1}-\sum_{\text{cells}}(\cdot)^{1-2j}<0$ and the bracket is $<-1<0$; with
	$\operatorname{sign}B_{2j}=(-1)^{j+1}$ the product has sign $(-1)^{j}$.

	\emph{The exact identity.}  Write each of the nine factors of
	$\E|X-\mu|=2g(\mu)P(\mu)$ through Binet's formula \eqref{eq:binet}, using
	$\log\Gamma(y+1)=\log y+\log\Gamma(y)$ at the nine integer arguments
	$y=K,\,N-K,\,n,\,N-n;\ Na,\,Nb,\,Nc,\,Nd,\,N$.  The Binet remainders contribute exactly
	$\sum_{\text{margins}}J(\cdot)-\sum_{\text{cells}}J(\cdot)-J(N)$, which is the
	right-hand side of \eqref{eq:hyp-binet}.  The elementary parts assemble to the leading
	factor, exactly as in Step~2 of Theorem~\ref{thm:hyp-expansion}.  Writing
	$\log\Gamma(y+1)=(y+\tfrac12)\log y-y+\tfrac12\log2\pi+J(y)$ for each of the nine factors and
	taking margins minus cells-and-total: the terms $-y$ cancel, both groups summing to $2N$; the
	$y\log y$ terms cancel by the entropy identity of Step~2; the constants leave
	$-\tfrac12\log2\pi$ (four minus five copies of $\tfrac12\log2\pi$); and the half-logs
	$\tfrac12\log y$ sum to
	\[
		\tfrac12\Bigl[\log\bigl(K(N-K)n(N-n)\bigr)-\log\bigl(N^5abcd\bigr)\Bigr]
		=\tfrac12\log\frac{N^{4}ad}{N^{5}(ad)^{2}}=-\tfrac12\log(Nad),
	\]
	using $K(N-K)n(N-n)=N^{4}ad$ and $abcd=(ad)^{2}$ (both from $ad=bc$).  With the prefactor
	$\log\bigl(2g(\mu)\bigr)=\log(2Nad)$ --- note $g(\mu)=Na\cdot Nd/N=Nad$ at $h=0$ --- the
	elementary total is
	$\log(2Nad)-\tfrac12\log(Nad)-\tfrac12\log2\pi=\log\sqrt{2Nad/\pi}$.

	\emph{Negativity.}  Pair each cell against a margin containing it, as above: from $a<\kappa$
	we get $e^{-Nat}>e^{-N\kappa t}=e^{-Kt}$, and likewise
	$e^{-Nbt}>e^{-(N-n)t}$, $e^{-Nct}>e^{-nt}$, $e^{-Ndt}>e^{-(N-K)t}$.  Summing,
	$\Delta_N(t)<-e^{-Nt}<0$.

	\emph{Enveloping.}  Apply Proposition~\ref{prop:skeleton} with $\Delta=\Delta_N$: the
	moments give
	\[
		c_j=\frac{B_{2j}}{(2j-1)2j}\cdot\frac1{N^{2j-1}}
		\Bigl[W_{2j-1}-1-\sum_{\text{cells}}(\cdot)^{1-2j}\Bigr]
		=\frac{\tilde a_{2j-1}(0)}{N^{2j-1}},
	\]
	and \eqref{eq:skeleton} is the assertion.
\end{proof}

\section{The family pattern}\label{sec:family}

	The four cases are instances of one scheme, which we record as a table.  Throughout,
	$\nu=\lceil\mu\rceil$ and $h$ is the ceiling defect of the mean ($\theta=\{\lambda\}=1-h$
	for the Poisson when $\lambda\notin\Z$; the coefficients are written in each law's natural
	variable).

\begin{center}\scriptsize
	\begin{tabular}{lllll}
	\hline
	law & ratio $\deg$ & $g(k)$ & leading factor & weight of $B_{m+1}(\cdot)$\\
	\hline
	$\Bin(N,p)$ & $1$ & $kq$ & $\sqrt{2Npq/\pi}$ & $p^{-m}+(-1)^{m+1}q^{-m}$\\[2pt]
	$\Poi(\lambda)$ & $1$ & $k$ & $\sqrt{2\lambda/\pi}$ & $1$\\[2pt]
	$\NB(r,p)$ & $1$ & $k/p$ & $\tfrac1p\sqrt{2rq/\pi}$ & $(p/q)^{m}-p^{m}$\\[2pt]
	$\Hyp(N,K,n)$ & $2$ & $k(k+L)/N$ & $\sqrt{2Nad/\pi}$ &
		$\begin{array}[t]{@{}l@{}}
			(a^{-m}+d^{-m})\\
			{}+(-1)^{m+1}(b^{-m}+c^{-m})
		\end{array}$\\
	\hline
	\end{tabular}
	\end{center}

	The last column records the weight $w_m$ of $B_{m+1}(h)$ in the common normalisation
	$S_{m+1}=(\mathrm{const})\,B_{m+1}-w_m\,B_{m+1}(h)-(\text{elementary tail})$, the global factor
	$\tfrac{(-1)^{m+1}}{m(m+1)}$ being kept outside; entries that grow with $k$ enter $w_m$ with
	sign $+$, entries that shrink with the parity sign $(-1)^{m+1}$.  For the negative binomial
	this is $w_m=(p/q)^m-p^m$, i.e.\ minus the coefficient $p^m-(p/q)^m$ of $B_{m+1}(h)$ displayed
	in Theorem~\ref{thm:nb-expansion}, where the same bracket is written in the opposite order.

\begin{remark}[The scheme]\label{rem:scheme}
	(i) \emph{Collapse}: one telescoping \eqref{eq:tele}, with $\deg g$ equal to the degree of
	the Katz--Ord ratio (Remark~\ref{rem:ord-degree}).
	(ii) \emph{Cancellation}: the elementary tail of the Stirling expansion of $P(\nu)$ is
	$\sum(-1)^m h^m(\text{growing cells})^{-m}/m$, one term per entry of the law that grows with
	$k$, and it is cancelled exactly by the collapse prefactor $2g(\nu)$, which is the product of
	the size-bias factors of precisely those entries --- one for the binomial ($\nu$), none for
	the Poisson, one for the negative binomial ($\nu$), two for the hypergeometric
	($\nu$ and $\nu+L$).
	(iii) \emph{Pure Appell}: what remains is a constant-weight $B_{m+1}$ plus a cell-weight
	$B_{m+1}(h)$, growing cells entering with $+$ and shrinking cells with the parity sign.
	(iv) \emph{Integer mean}: parity collapse, strict sign alternation, and an enveloping odd
	series, always by Proposition~\ref{prop:skeleton}, with a bracket
	$\Delta=\sum(\text{numerator exponentials})-\sum(\text{denominator exponentials})$ that is
	strictly negative: for the Poisson it is the single term $-e^{-\lambda t}$; for the negative
	binomial the one numerator argument is the largest; for the hypergeometric each cell exponential
	is paired below a margin exponential, the total $N$ contributing a further $-e^{-Nt}$.
\end{remark}

\section{Concluding remarks}\label{sec:conclusion}

	The closed forms \eqref{eq:collapse} are classical case by case.  For the binomial the
	history runs from De~Moivre onward \cite{diaconis_zabell}.  For the Poisson the closed form
	\eqref{eq:poi-closed} was obtained independently by Ramasubban~\cite{ramasubban1958} and
	Crow~\cite{crow1958}.  The hypergeometric case of \eqref{eq:collapse}, quadratic $g$
	included, is Ramasubban's formula (3.3) \cite[p.~554]{ramasubban1958}, and is recorded,
	together with the negative binomial case, by Kamat~\cite{kamat1965} (see
	\cite[Ch.~5--6]{jkk}).  General absolute-moment representations for discrete laws are in
	Katti~\cite{katti1960}, and Kamat~\cite{kamat1965} also covers the logarithmic distribution,
	which lies outside the framework used here.

	On the asymptotic side, Johnson's expansion \cite{johnson1957} of the ratio of the mean deviation to the
	standard deviation for the binomial was generalised to a class of laws by
	Kamat~\cite{kamat1966}; both give first-order information only, whereas the expansions here are to all orders.  Diaconis and Zabell
	\cite[\S6--7]{diaconis_zabell} derive the De~Moivre-type identities for the Poisson and the
	binomial through Charlier and Krawtchouk polynomials, and explicitly leave the negative
	binomial (Meixner) and hypergeometric (Hahn) cases as a programme.  The size-bias survey of
	Arratia, Goldstein and Kochman~\cite{agk} does not touch the mean absolute deviation.  The
	recent work of Ruzankin~\cite{ruzankin2020} gives exact recurrences for the absolute central
	moments of the Poisson law, reproducing the closed form \eqref{eq:poi-closed}, but no
	asymptotic expansion.  For the hypergeometric law, Ai and Pelekis~\cite{ai_pelekis2026}
	in a study of hypergeometric tail bounds obtained a one-sided lower bound on the mean absolute
	deviation as an auxiliary result, and
	Ouimet~\cite{ouimet2023} treats the local expansion and median of the negative binomial rather
	than its mean deviation.

	The telescoping \eqref{eq:tele} and the Ord degree of $g$ belong to the discrete
	Pearson\,/\,Ord system (Cacoullos--Papathanasiou~\cite{cacoullos_pap1989},
	Korwar~\cite{korwar1991}, Afendras--Papa\-da\-tos--Papathanasiou~\cite{afendras_pp2011}).  The new
	contributions here are the reading of that telescoping through size biasing, giving the
	universal form \eqref{eq:universal} with its parameter shifts (the lattice-exact form of the
	von Bortkiewicz phenomenon, Remark~\ref{rem:bortkiewicz}); the complete expansions
	\eqref{eq:poi-mad}, \eqref{eq:nb-mad}, \eqref{eq:hyp-mad} with closed coefficients and exact
	lattice oscillation; the identification of the cancellation mechanism with the size-bias
	prefactors across the family; and the enveloping theorems.

	Three degenerations connect the four expansions: $p=\lambda/N$ takes the binomial to the
	Poisson, $rq\to\lambda$ the negative binomial to the Poisson, and $\eta\to0$ with $n$ fixed
	the hypergeometric to the binomial.  In each case the effective parameter of the expansion
	degrades exactly along the limiting regime, and a matched, uniform description would require
	the corresponding uniform expansions.  We leave the matching open.

	For the binomial, the companion paper on prescribed centres treats the fold in the tail,
	where the collapse fails and the correct object is a weighted stop-loss tail with Eulerian
	structure.  The corresponding programme for the present family is open, in particular for
	the hypergeometric law, where the prescribed-centre problem is the non-central $2\times2$
	table.

\section*{Declarations}

	\textbf{Funding.}\enspace The author did not receive support from any organization for the
	submitted work.

	\textbf{Competing interests.}\enspace The author declares no competing interests.

	\textbf{Data availability.}\enspace No datasets were generated or analysed during the current
	study.


\end{document}